\newcommand{\bM}{\bar{M}}
\newcommand{\ol}{\overline}
\newcommand{\ul}{\underline}
\newtheorem{theorem}{Theorem}[section]
\newtheorem{lemma}[theorem]{Lemma}
 \theoremstyle{definition}
\theoremstyle{remark}
\newtheorem{remark}[theorem]{Remark}
\numberwithin{equation}{section}
\begin{document}

\title[Hessian equations of parabolic type]
{The first initial boundary value problem
for Hessian equations of parabolic type
on Riemannian manifolds}

\author{Weisong Dong}
\address{Department of Mathematics, Harbin Institute of Technology,
         Harbin, 150001, China}
\email{dweeson@gmail.com}
\author{Heming Jiao}
\address{Department of Mathematics, Harbin Institute of Technology,
         Harbin, 150001, China}
\email{jiao@hit.edu.cn}

%\date{}

\begin{abstract}
In this paper, we are concerned with the first initial boundary
value problem for a class of fully nonlinear parabolic equations
on Riemannian manifolds. As usual, the establishment of the
\emph{a priori} $C^2$ estimates is our main part. Based on
these estimates, the existence of classical solutions is proved
under conditions which are nearly optimal.

\emph{Mathematical Subject Classification (2010):} 35B45, 35R01, 35K20, 35K96.

\emph{Keywords:}  Fully nonlinear parabolic equations; Riemannian manifolds;
First initial boundary value problem; \emph{a priori} estimates.

\end{abstract}

\maketitle

\section{Introduction}

In this paper, we study the Hessian equations of parabolic
type of the form
\begin{equation}
\label{eqn}
f (\lambda(\nabla^{2}u + \chi), - u_t)  = \psi (x, t)
\end{equation}
in $M_T = M \times (0,T] \subset M \times \mathbb{R}$
satisfying the boundary condition
\begin{equation}
\label{eqn-b}
 u = \varphi \; \mbox{on $\mathcal{P}M_T$},
\end{equation}
where $(M, g)$ is a compact Riemannian manifold of
dimension $n \geq 2$ with smooth boundary $\partial M$
and $\bM := M \cup \partial M$, $\mathcal{P} M_T = B M_T \cup S M_T$
is the parabolic boundary of $M_T$ with $B M_T = M \times \{0\}$ and
$S M_T = \partial M \times [0,T]$, $f$ is a symmetric smooth function
of $n + 1$ variables defined in an open convex symmetric cone
$\Gamma \subset \mathbb{R}^{n + 1}$ with vertex at the origin and
\[\Gamma_{n + 1} \equiv \{\lambda \in \mathbb{R}^{n + 1}:\mbox{ each component }
   \lambda_i > 0, \; 1 \leq i \leq n+1\} \subseteq \Gamma,\]
$\nabla^2 u$ denotes the Hessian of $u (x, t)$ with respect to $x \in M$,
$u_t = \frac{\partial u}{\partial t}$ is the derivative of $u (x, t)$ with respect
to $t \in [0, T]$, $\chi$ is a smooth
(0, 2) tensor on $\bM$ and $\lambda (\nabla^{2} u + \chi) = (\widehat{\lambda}_1 ,\ldots,\widehat{\lambda}_n)$
denotes the eigenvalues of $\nabla^{2} u + \chi$ with respect to the metric $g$.
%In addition We assume $\psi \in C^{4, 1} (\overline {M_T}) $, $\varphi \in C^{4, 1}(\mathcal{P}M_T)$.

As in \cite{CNS} (see \cite{G} also), we assume that $f$ satisfies the following structural conditions:
\begin{equation}
\label{f1}
f_i \equiv \frac{\partial f}{\partial \lambda_i} > 0 \mbox{ in } \Gamma,\ \ 1\leq i \leq n + 1,
\end{equation}
\begin{equation}
\label{f2}
f\mbox{ is concave in }\Gamma
\end{equation}
and
\begin{equation}
\label{f5}
\delta_{\psi, f} \equiv \inf_M \psi - \sup_{\partial \Gamma} f > 0, \mbox{ where }
   \sup_{\partial \Gamma} f \equiv \sup_{\lambda_0 \in \partial \Gamma}
      \limsup_{\lambda \rightarrow \lambda_0} f (\lambda).
\end{equation}

We mean an admissible function by $u \in C^{2}(M_T)$ satisfying
$(\lambda(\nabla^{2} u + \chi), - u_t) \in \Gamma$ in $M_T$,
where $C^{k}(M_T)$ denotes the space of functions defined on
$M_T$ which are $k$-times continuously differentiable with
respect to $x \in M$ and $[k/2]$-times continuously differentiable
with respect to $t \in (0, T]$ and $[k/2]$ is the largest integer not
greater than $k/2$. We note that \eqref{eqn} is parabolic for
admissible solutions (see \cite{CNS}).

We first recall the following notations
\[
|u|_{C^k (\ol{M_T})} = \sum_{|\beta| + 2r \leq k} \sup_{\ol{M_T}} |\nabla^\beta D^r_t u|,
\]
\[
\begin{aligned}
|u|_{C^{k + \alpha} (\ol{M_T})} & = |u|_{C^k (\ol{M_T})} \\
   + & \sup_{|\beta| + 2r = k}
   \sup_{\begin{array}{c}
   (x, s), (y, t) \in \ol{M_T} \\ (x, s) \neq (y, t) \end{array}}
     \frac{|\nabla^\beta D^r_t u (x, s) - \nabla^\beta D^r_t u (y, t)|}{\Big(|x - y| + |s - t|^{1/2}\Big)^\alpha}
\end{aligned}
\]
and $C^{k + \alpha} (\ol{M_T})$ denotes the subspace of $C^{k} (\ol{M_T})$ defined by
\[
C^{k + \alpha} (\ol{M_T}) : = \{u \in C^{k} (\ol{M_T}): |u|_{C^{k + \alpha} (\ol{M_T})} < \infty\}.
\]

In the current paper, we are interested in the existence of admissible solutions
to \eqref{eqn}-\eqref{eqn-b}. The key step is to establish the \emph{a priori} $C^2$ estimates.
Using the methods from \cite{G2}, where Guan studied the elliptic counterpart of \eqref{eqn}:
\begin{equation}
\label{eqn-elliptic}
f (\lambda(\nabla^{2}u + \chi))  = \psi (x)
\end{equation}
in $M$ satisfying the Dirichlet boundary condition, we are able to obtain these
estimates under nearly minimal restrictions on $f$.

Our main results are stated in the following theorem.
\begin{theorem}
\label{dj-th3}
Suppose that $\psi \in C^{\infty} (\ol{M_T})$, $\varphi \in C^{\infty} (\ol{\mathcal{P} M_T})$ for
$0 < T \leq \infty$,
\begin{equation}
\label{comp1}
(\lambda (\nabla^2 \varphi (x, 0) + \chi(x)), - \varphi_t (x, 0)) \in \Gamma
   \mbox{ for all } x \in \ol M
\end{equation}
and
\begin{equation}
\label{comp}
f (\lambda (\nabla^2 \varphi (x, 0) + \chi(x)), - \varphi_t (x, 0))  = \psi (x, 0)
   \mbox{ for all } x \in \partial M.
\end{equation}
In addition to \eqref{f1}-\eqref{f5}, assume that
\begin{equation}
\label{f6}
f_j (\lambda) \geq \nu_0 \Big(1 + \sum_{i = 1}^{n+1} f_i (\lambda)\Big) \mbox{ for any }
  \lambda \in \Gamma \mbox{ with } \lambda_j < 0,
\end{equation}
for some positive constant $\nu_0$,
\begin{equation}
\label{c-290}
\sum_{i =1}^{n+1} f_i \lambda_i \geq - K_0 \sum_{i = 1}^{n + 1} f_i, \; \; \forall \lambda \in \Gamma
\end{equation}
for some $K_0 \geq 0$ and that there exists an admissible subsolution $\ul u \in C^2 (\ol{M_T})$
satisfying
\begin{equation}
\label{sub}
\left\{ \begin{aligned}
f(\lambda(\nabla^{2} \underline{u} + \chi), - \underline{u}_{t})  \geq \,& \psi(x,t) & \mbox{ in } M_T,\\
\ul u = \,& \varphi  & \mbox{ on } SM_T,\\
\ul u \leq \,& \varphi  & \mbox { on } BM_T.
\end{aligned} \right.
\end{equation}
Then there exists a unique admissible solution
$u \in C^{\infty} (\ol{M_T})$ of \eqref{eqn}-\eqref{eqn-b}.
\end{theorem}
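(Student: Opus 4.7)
The plan is to use the standard continuity method, so the theorem reduces to uniform a priori $C^{2,\alpha}(\ol{M_T})$ bounds for admissible solutions. Interpolating linearly in a parameter $\sigma \in [0,1]$ between the trivial problem solved by $\ul u$ and the target \eqref{eqn}-\eqref{eqn-b}, condition \eqref{f1} guarantees that the linearized parabolic operator is uniformly parabolic along any admissible solution, so the set of $\sigma$ for which a smooth admissible solution exists is open by the implicit function theorem in parabolic Schauder spaces; closedness is precisely the a priori estimate. Uniqueness is immediate from the parabolic comparison principle, which applies because \eqref{f1} makes the fully nonlinear operator monotone.

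The $C^{0}$ lower bound follows from $u \geq \ul u$ by comparison between \eqref{eqn} and \eqref{sub}, while an upper bound comes from a time-dependent barrier using \eqref{c-290}. For the spatial gradient I would run a Bernstein-type maximum principle argument applied to a quantity of the form $|\nabla u|^{2}+A(u-\ul u)^{2}$, with boundary gradient estimates built from linear barriers based on $\ul u$. The time derivative is controlled by differentiating \eqref{eqn} in $t$: $v:=u_t$ solves a linear parabolic equation whose boundary data are bounded on $SM_T$ by $\varphi_t$ and on $BM_T$ by inverting the fully nonlinear equation at $t=0$ using the compatibility conditions \eqref{comp1}-\eqref{comp}.

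The interior $C^{2}$ estimate is obtained by adapting Guan's argument for the elliptic analog \eqref{eqn-elliptic} to the parabolic setting. One applies the maximum principle to a test function of the form $W = \log\lambda_{\max}(\nabla^2 u + \chi) + \phi(|\nabla u|^2) + \eta(u-\ul u)$, differentiates \eqref{eqn} twice in space, and exploits concavity \eqref{f2} together with the sign conditions \eqref{f6} and \eqref{c-290}: the former absorbs contributions from eigendirections with negative eigenvalues once $\lambda_{\max}$ is large, while the latter bounds $\sum f_i \lambda_i$ from below so that the concavity inequality yields a coercive term. The extra time direction enters only through $u_t$, already under control, together with the monotonicity $f_{n+1}>0$ coming from \eqref{f1}.

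The main obstacle is the boundary estimate $|\nabla^{2} u|\leq C$ on $SM_T$. In Fermi coordinates adapted to a point $x_0\in\partial M$, pure tangential second derivatives are recovered by differentiating $u=\varphi$ twice along $\partial M$. Mixed tangential-normal derivatives $u_{\alpha n}$ are controlled by a Guan-Trudinger-type barrier of the form $\Psi = A(u-\ul u) + B|x-x_0|^{2} - Ct$ with $A\gg B\gg C$; the key is that $\delta_{\psi,f}>0$ from \eqref{f5} yields $\cL(u-\ul u)\leq -\delta_{\psi,f}\sum f_i$ for the linearized operator $\cL$, which is enough to dominate $\cL(\pm\partial_\alpha(u-\varphi))$ on a parabolic half-neighbourhood. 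The hardest step is the pure double normal estimate $u_{nn}\leq C$ on $SM_T$: it requires a quantitative argument showing that $\lambda(\nabla^{2}u+\chi)$ cannot approach $\partial\Gamma$ along the boundary, using \eqref{f5}, \eqref{f6} and \eqref{c-290} in combination. The lower bound on $u_{nn}$ is automatic from admissibility together with the already controlled tangential part. Once $|u|_{C^{2}(\ol{M_T})}$ is bounded, the equation is uniformly parabolic and concave in the Hessian, so Evans-Krylov theory produces $|u|_{C^{2+\alpha}(\ol{M_T})}$, and parabolic Schauder bootstrap upgrades $u$ to $C^{\infty}(\ol{M_T})$.
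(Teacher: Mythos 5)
Your overall architecture (a priori $C^2$ estimates, then Evans--Krylov and Schauder, with existence via a continuity/short-time argument) matches the paper, but two of your key steps contain genuine gaps. First, the bound on $u_t$ is not the triviality you make it: differentiating \eqref{eqn} in $t$ gives $F^{ij}\nabla_{ij}u_t - F^\tau (u_t)_t = \psi_t$, and since $F^\tau$ has no a priori positive lower bound, the inhomogeneity $\psi_t$ cannot be absorbed by a correction like $\pm Kt$, so the linear maximum principle does not close. The paper devotes all of Section 4 to this, applying the maximum principle to $(-u_t)e^{\phi}$ with $\phi=\tfrac{\delta^2}{2}|\nabla u|^2+\delta u+b(\ul u-u)$ and splitting into the two cases $|\nu_\mu-\nu_\lambda|\ge\beta$ and $<\beta$ of Guan's subsolution dichotomy (Lemma \ref{G-Lemma} and \eqref{Guan}), using \eqref{c-290} in both. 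Second, your boundary barrier rests on a false inequality: \eqref{f5} does not yield $\mathcal{L}(u-\ul u)\le -\delta_{\psi,f}\sum f_i$. Concavity of $F$ only gives $\mathcal{L}(u-\ul u)\le 0$ (with equality possible, e.g.\ if $\ul u$ is itself a solution); the strict gain $\mathcal{L}(\ul u-u)\ge\varepsilon(1+\sum F^{ii}+F^\tau)$ holds only in the case $|\nu_\mu-\nu_\lambda|\ge\beta$, and in the complementary case the paper must add $ad-\tfrac{N}{2}d^2$ to the barrier and exploit $f_i\ge\tfrac{\beta}{\sqrt{n+1}}\sum f_j$. Moreover your barrier $A(u-\ul u)+B|x-x_0|^2-Ct$ omits the term $-A_3\sum_{\gamma<n}|\nabla_\gamma(u-\varphi)|^2$, which is indispensable because $\mathcal{L}(\pm\nabla_\alpha(u-\varphi))$ is only bounded by $C(1+\sum f_i|\widehat\lambda_i|+\sum f_i+F^\tau)$ and the term $\sum f_i|\widehat\lambda_i|$ can dominate $\sum f_i$; producing the compensating $-\sum f_i|\widehat\lambda_i|$ requires \eqref{eq3-7} and Lemma \ref{lem2}.

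More broadly, you never invoke the central device that makes the theorem work under these near-minimal hypotheses, namely the dichotomy on $|\nu_\mu-\nu_\lambda|$ versus $\beta$ together with Lemma \ref{G-Lemma}; without it neither the $u_t$ bound, nor the global second-order bound (where one also needs the Andrews--Gerhardt inequality and the Urbas partition into index sets $J,K$ to handle the term $E$), nor the boundary barrier closes. You also misplace condition \eqref{f6}: in the paper it is used only in the gradient estimate (to force $F^{11}$ to be large at the maximum point where $U_{11}<0$), not to absorb negative eigendirections in the Hessian estimate, which is instead handled by concavity and the $J,K$ splitting.
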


\begin{remark}
Condition \eqref{f6} is only used to derive the gradient estimates as many authors,
see \cite{L}, \cite{GS91}, \cite{G0}, \cite{Li91}, \cite{Tr90} and \cite{U} for examples.

Condition \eqref{c-290} is used in the estimates for both $|\nabla u|$ and $|u_t|$.
We will see that in the gradient estimates, condition \eqref{c-290} can be weakened by
\begin{equation}
\label{c-290'}
\sum_{i =1}^{n+1} f_i \lambda_i \geq
   - K_0 \Big(1 + \sum_{i = 1}^{n + 1} f_i\Big), \; \; \forall \lambda \in \Gamma.
\end{equation}

As in \cite{G2}, the existence of $\ul u$ is useful to construct some barrier functions
which are crucial to our estimates.
\end{remark}

The most typical examples of $f$ satisfying the conditions in Theorem \ref{dj-th3}
are $f = \sigma^{1/k}_k$ and $f = (\sigma_k / \sigma_l)^{1/(k - l)}$,
$1 \leq l < k \leq n + 1$, defined in the G{\aa}rding cone
\[
\Gamma_{k} = \{\lambda \in \mathbb{R}^{n + 1}: \sigma_{j} (\lambda) > 0, j = 1, \ldots, k\},
\]
where $\sigma_{k}$ are the elementary symmetric functions
\[
\sigma_{k} (\lambda) = \sum_ {i_{1} < \ldots < i_{k}}
\lambda_{i_{1}} \ldots \lambda_{i_{k}},\ \ k = 1, \ldots, n + 1.
\]

When $f = \sigma^{1/(n + 1)}_{n + 1}$, equation \eqref{eqn} can be written as
the parabolic Monge-Amp\`{e}re equation:
\begin{equation}
\label{MA-p}
- u_t \det(\nabla^2 u + \chi) = \psi^{n + 1},
\end{equation}
which was introduced by Krylov in \cite{K1} when $\chi = 0$ in
Euclidean space. Our motivation to study \eqref{eqn} is from their
natural connection to the deformation of surfaces by
some curvature functions. For example,
equation \eqref{MA-p} plays a key role in the study of
contraction of surfaces by Gauss-Kronecker curvature
(see Firey \cite{Firey} and Tso \cite{Tso}).
For the study of more general curvature flows, the
reader is referred to \cite{A}, \cite{AMZ}, \cite{Han97},
\cite{M} and their references.
\eqref{MA-p} is also relevant to a
maximum principle for parabolic equations (see Tso \cite{Tso1}).

In \cite{L}, Lieberman studied the first initial-boundary value
problem of equation \eqref{eqn} when $\chi \equiv 0$ and
$\psi$ may depend on $u$ and $\nabla u$ in a bounded domain
$\Omega \subset \mathbb{R}^{n + 1}$ under various conditions.
Jiao and Sui \cite{JS} considered the parabolic Hessian equation
of the form
\begin{equation}
\label{eqn2}
f (\lambda (\nabla^2 u + \chi)) - u_t = \psi (x, t)
\end{equation}
on Riemannian manifolds using techniques from \cite{G2} and
\cite{GJ} where the authors studied the corresponding elliptic
equations. Guan, Shi and Sui \cite{GSS} extended the work
of \cite{JS} using the idea of \cite{G2}; they also treated
the parabolic equation of the form
\begin{equation}
\label{eqn3}
f (\lambda (\nabla^2 u + \chi)) = e^{u_t + \psi}.
\end{equation}
Applying the methods of \cite{G},
Bao and Dong \cite{BD} solved \eqref{eqn}-\eqref{eqn-b} under an
additional condition which is introduced in \cite{G} (see \cite{GJ} also)
\begin{equation}
\label{gj-condition}
T_{\lambda} \cap \partial \Gamma^{\sigma} \; \mbox{is a
nonempty compact set, $\forall \lambda \in \Gamma$ and $\sup_{\partial \Gamma} f < \sigma < f (\lambda)$},
\end{equation}
where $\partial \Gamma^{\sigma} = \{\lambda \in \Gamma: f(\lambda) = \sigma\}$ is
the boundary of $\Gamma^{\sigma} = \{\lambda \in \Gamma: f (\lambda) > \sigma\}$
and $T_{\lambda}$ denote
the tangent plane at $\lambda$ of $\partial \Gamma^{f (\lambda)}$,
for $\sigma > \sup_{\partial \Gamma} f$ and $\lambda \in \Gamma$.
The reader is referred to \cite{Li90}, \cite{U}, \cite{G0}, \cite{G}, \cite{G2},
\cite{GJ}, \cite{GJ2} and their references for the study of elliptic Hessian
equations on manifolds.
%\begin{equation}
%\label{flow}
%\left\{ \begin{aligned}
%\frac{\partial X}{\partial t} (x, t)  = \,& - F (\mathcal{W} (x, t)) \nu (x, t) &
%     \mbox{ in } \mathbb{S}^n \times [0, T),\\
%X (x, 0) = \,& X_0 (x)  & x \in \mathbb{S}^n,
%\end{aligned} \right.
%\end{equation}
%where $X_0 : \mathbb{S}^n \rightarrow \mathbb{R}^{n + 1}$ is a compact, convex
%hypersurface without boundary of dimension equal to $n$,
%$X : \mathbb{S}^n \times [0, T) \rightarrow \mathbb{R}^{n + %1}$, $\nu$ is the
%outer normal field of the evolving hypersurface
%$X$ and $\mathcal{W}$ is the corresponding

We can prove the short time existence as Theorem 15.9 in \cite{L}.
So without of loss of generality, we may assume
\begin{equation}
\label{comp-0}
f (\lambda (\nabla^2 \varphi (x, 0) + \chi(x)), - \varphi_t (x, 0))  = \psi (x, 0)
   \mbox{ for all } x \in \ol M.
\end{equation}

As usual, the main part of this paper is to derive the \emph{a priori}
$C^2$ estimates. We see that \eqref{eqn} is uniformly parabolic after
establishing the $C^2$ estimates by \eqref{f1} and \eqref{f5}.
The $C^{2, \alpha}$ estimates can be obtained by applying
Evans-Krylov theorem (see \cite{Evans} and \cite{K}).
Finally Theorem \ref{dj-th3} can be proved as
Theorem 15.9 of \cite{L}.

The rest of this paper is organized as follows.
In section 2, we introduce some notations and
useful lemmas. $C^1$ estimates are derived in
Section 3. An \emph{a priori} bound for $|u_t|$
is obtained in Section 4. Section 5 and Section 6
are devoted to the global and boundary estimates
for second order derivatives respectively.

\section{Preliminaries}
\label{gj-P}
\setcounter{equation}{0}
\medskip

Let $F$ be the function defined by $F (A, \tau) = f (\lambda (A), \tau)$
for $A \in \mathbb{S}^{n}$, $\tau \in \mathbb{R}$ with $(\lambda (A), \tau) \in \Gamma$,
where $\mathbb{S}^{n}$ is the set of $n \times n$ symmetric matrices.
It was shown in \cite{CNS} that $F$ is concave from \eqref{f2}.
For simplicity we shall use the notations $U = \nabla^2 u + \chi$,
$\ul U = \nabla^2 \ul u + \chi$
and under an orthonormal local frame $e_1, \ldots, e_{n}$,
\[ U_{ij} \equiv U (e_i, e_j) = \nabla_{ij} u + \chi_{ij}, \;\;
 \ul U_{ij} \equiv \ul U (e_i, e_j) = \nabla_{ij} \ul u + \chi_{ij}.
\]
Thus, \eqref{eqn} can be written in the form locally
\begin{equation}
\label{eqn0}
F (U, -u_t) = f (\lambda (U_{ij}), - u_t) = \psi.
\end{equation}
Let
\[
\begin{aligned}
F^{ij} = \frac{\partial F}{\partial A_{ij}} (U, - u_t), \,&
   F^{\tau} = \frac{\partial F}{\partial \tau} (U, - u_t)\\
     F^{ij, kl} = \frac{\partial^2 F}{\partial A_{ij} \partial A_{kl}} (U, - u_t), \,&
        F^{ij, \tau} = \frac{\partial^2 F}{\partial A_{ij} \partial \tau} (U, - u_t).
\end{aligned}
\]
By \eqref{f1} we see that $F^\tau > 0$ and $\{F^{ij}\}$ is positive definite.
We shall also denote the eigenvalues of $\{F^{ij}\}$ by
$f_1, \ldots, f_n$ when there
is no possible confusion. We note that $\{U_{ij}\}$ and $\{F^{ij}\}$
can be diagnolized simultaneously and that
\[F^{ij} U_{ij} = \sum f_i \widehat{\lambda}_i, \;\; F^{ij} U_{ik} U_{kj} = \sum f_i \widehat{\lambda}_i^2,\]
where $\lambda (\{U_{ij}\}) = (\widehat{\lambda}_1, \ldots, \widehat{\lambda}_{n})$.

Similarly to \cite{G2}, we write
\[\begin{aligned}\mu (x, t) = & (\lambda (\ul U (x, t)), - \ul u_t (x, t)), \\
  \lambda(x, t) = & (\lambda(U (x, t)), - u_t (x, t))
\end{aligned}\]
and $\nu_\lambda \equiv Df(\lambda)/|Df(\lambda)|$ is the unit normal vector to the level
hypersurface $\partial \Gamma^{f (\lambda)}$ for $\lambda \in \Gamma$.
Since $K \equiv \{\mu (x, t):  (x, t) \in \overline{M_T}\}$ is a compact subset of $\Gamma$,
there exist uniform constants $\beta \in (0, \frac{1}{2 \sqrt{n + 1}})$ such that
\begin{equation}
\label{G}
\nu_{\mu (x, t)} - 2 \beta \mathbf{1} \in \Gamma_{n + 1}, \forall (x, t) \in \overline {M_T}.
\end{equation}
where $\mathbf{1} = (1, \cdots, 1) \in \mathbb{R}^{n + 1}$ (see \cite{G2}).

We need the following Lemma which is proved in \cite{G2}.
\begin{lemma}
\label{G-Lemma}
Suppose that $|\nu_\mu - \nu_\lambda| \geq \beta$.
Then there exists a uniform constant $\varepsilon > 0$ such that
\begin{equation}
\label{3I-100}
\sum_{i=1}^{n + 1} f_i (\lambda) (\mu_{i} - \lambda_i) \geq
     \varepsilon \Big(1 + \sum_{i=1}^{n + 1} f_i (\lambda) \Big).
\end{equation}
\end{lemma}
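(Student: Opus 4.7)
My plan is to derive the inequality by combining the concavity of $f$ with a geometric analysis that exploits the separation $|\nu_\mu - \nu_\lambda| \geq \beta$ together with the compactness of $K$.

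I would begin by using that $K = \{\mu(x,t) : (x,t) \in \overline{M_T}\}$ is a compact subset of $\Gamma$ and that $\nu_\mu - 2\beta \mathbf{1} \in \Gamma_{n+1}$ uniformly, as guaranteed by \eqref{G}. This allows me to fix a single small $s_0 \in (0, \beta)$ such that $\tilde\mu := \mu - s_0 \mathbf{1}$ remains in $\Gamma$ for every $\mu$ arising in $K$, and moreover, by continuity together with \eqref{f5}, $f(\tilde\mu) \geq \sup_{\partial \Gamma} f + c_0$ for some uniform $c_0 > 0$. Applying the concavity of $f$ at $\lambda$ to the point $\tilde\mu$ then yields
\[
\sum_{i=1}^{n+1} f_i(\lambda)(\mu_i - \lambda_i) \;\geq\; s_0 \sum_{i=1}^{n+1} f_i(\lambda) + \bigl[f(\tilde\mu) - f(\lambda)\bigr].
\]

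If $f(\lambda) \leq f(\tilde\mu) - c_0/2$, this inequality gives the conclusion at once with $\varepsilon := \min(s_0, c_0/2)$, and the normals hypothesis is not needed.

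The substantive case is $f(\lambda) > f(\tilde\mu) - c_0/2$, where the shift argument alone is insufficient: since $f(\lambda)$ may be arbitrarily large, $f(\tilde\mu) - f(\lambda)$ can be very negative. Here the hypothesis $|\nu_\mu - \nu_\lambda| \geq \beta$ becomes essential. The idea is that $\partial \Gamma^{f(\lambda)}$ is a smooth convex hypersurface (by concavity of $f$) with outward unit normal $\nu_\lambda$ at $\lambda$; since $\nu_\mu$ sits uniformly in the interior of $\Gamma_{n+1}$ yet differs from $\nu_\lambda$ by at least $\beta$, one should be able to produce a direction $v$ with $\nu_\lambda \cdot v$ uniformly bounded below and a modified base point $\mu - t v \in \Gamma$ (for uniform $t > 0$) whose $f$-value exceeds $f(\lambda)$ by a controlled amount. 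A further application of concavity, now at this modified point, then furnishes the required bound. The main obstacle is precisely this geometric step: one must convert the purely angular information $|\nu_\mu - \nu_\lambda| \geq \beta$ into a quantitative convexity estimate that is uniform in the possibly unbounded location of $\lambda \in \Gamma$. This forces one to combine the compactness of $K$ with the quantitative interior position $\nu_\mu - 2\beta \mathbf{1} \in \Gamma_{n+1}$ in order to rule out the pathological scenario where the level hypersurfaces become asymptotically parallel in a direction compatible with the separation of $\nu_\lambda$ and $\nu_\mu$.
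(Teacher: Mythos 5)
Your opening move (shift $\mu$ to $\tilde\mu=\mu-s_0\mathbf{1}$, apply concavity of $f$ at $\lambda$) is correct and does settle the case $f(\lambda)\le f(\tilde\mu)-c_0/2$. But the remaining case is the entire content of the lemma, and there you only state an intention (``one should be able to produce a direction $v$ \dots'') and then explicitly name the ``main obstacle'' without overcoming it; as written this is a plan, not a proof. Moreover, the plan cannot be carried out in the generality you pose it: you ask for a point near $\mu$ whose $f$-value exceeds $f(\lambda)$ by a definite amount, but no such point exists once $f(\lambda)$ exceeds the supremum of $f$ on a neighborhood of $K$, and nothing in \eqref{f1}, \eqref{f2}, \eqref{f5} or in the hypothesis $|\nu_\mu-\nu_\lambda|\ge\beta$ rules this out. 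In fact the bare statement ``$\mu\in K$ compact, $\lambda\in\Gamma$, $|\nu_\mu-\nu_\lambda|\ge\beta$ implies \eqref{3I-100}'' is false: take $f=\sum\log\lambda_i$ on $\Gamma_{n+1}$ (which satisfies \eqref{f1}, \eqref{f2}, \eqref{f5}), $\mu=\mathbf{1}$ and $\lambda=t(1,\dots,1,2)$ with $t\to\infty$; the two unit normals stay a fixed positive distance apart, yet $\sum f_i(\lambda)(\mu_i-\lambda_i)=\sum 1/\lambda_i-(n+1)\to-(n+1)<0$. The lemma is true only because $\mu=\mu(x,t)$ and $\lambda=\lambda(x,t)$ come from the subsolution/solution pair, so that $f(\mu)\ge\psi=f(\lambda)$; any correct proof must use this relation, and yours never does.

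For comparison, the argument in the cited reference \cite{G2} runs as follows. Fix $r>0$ small so that the Euclidean ball $B_{2r}(\mu)$ lies in $\Gamma$ for every $\mu\in K$, and set $\xi=(\nu_\mu-\nu_\lambda)/|\nu_\mu-\nu_\lambda|$, so that $\nu_\mu\cdot\xi=\tfrac12|\nu_\mu-\nu_\lambda|\ge\beta/2$ and $\nu_\lambda\cdot\xi=-\tfrac12|\nu_\mu-\nu_\lambda|\le-\beta/2$. Concavity at $\lambda$ with target point $\mu+r\xi$ gives
\[
\sum f_i(\lambda)(\mu_i-\lambda_i)\ \ge\ f(\mu+r\xi)-f(\lambda)-r\,Df(\lambda)\cdot\xi\ \ge\ f(\mu+r\xi)-f(\lambda)+\frac{r\beta}{2\sqrt{n+1}}\sum f_i(\lambda),
\]
and by compactness of $K$ and smoothness of $f$ one may shrink $r$ so that $f(\mu+r\xi)\ge f(\mu)+\delta_1$ for some uniform $\delta_1>0$ whenever $\nu_\mu\cdot\xi\ge\beta/2$; combining with $f(\mu)\ge f(\lambda)$ yields \eqref{3I-100} with $\varepsilon=\min\{\delta_1,\,r\beta/2\sqrt{n+1}\}$. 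Note that a single well-chosen direction produces both the constant term and the $\varepsilon\sum f_i(\lambda)$ term, and that the separation of normals is used twice, once at $\lambda$ and once at $\mu$. This is exactly the quantitative step your sketch leaves open.
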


Define the linear operator $\mathcal{L}$ locally by
\[
\mathcal{L} v = F^{ij} \nabla_{ij} v - F^\tau v_t, \mbox{ for } v \in C^2 (\ol{M_T}).
\]

From Lemma \ref{G-Lemma} and Lemma 6.2 of \cite{CNS} it is easy to derive that
when $|\nu_{\mu (x, t)} - \nu_{\lambda (x, t)}| \geq \beta$,
\begin{equation}
\label{gj}
\mathcal{L} (\ul u - u) \geq \varepsilon \Big(1+ \sum F^{ii} + F^{\tau} \Big).
\end{equation}

If $|\nu_\mu - \nu_\lambda| < \beta$, we have
$\nu_{\lambda} - \beta \mathbf{1} \in \Gamma_{n + 1}$. It follows that
\begin{equation}
\label{Guan}
f_{i} \geq \frac{\beta}{\sqrt{n + 1}} \sum_{j = 1}^{n + 1} f_{j}, \;
\forall 1 \leq i \leq n + 1.
\end{equation}

\section{The $C^1$ estimates}

Since $u$ is admissible and
$\Gamma \subset \{ \lambda \in \mathbb{R}^{n + 1}: \sum_{i =1}^{n + 1} \lambda_i > 0\}$,
we see that $u$ is a subsolution of
\begin{equation}
\label{ups}
\left\{\begin{aligned} \triangle h - h_t + \mathrm{tr} (\chi) = & 0, \; & \mbox{ in } \; M_T,\\
  h = & \varphi, \; & \mbox{ on } \; \mathcal{P} M_T.
\end{aligned}\right.
\end{equation}
Let $h$ be the solution of \eqref{ups}. It follows from the maximum principle that
$\ul u \leq u \leq h$ on $\overline {M_T}$.
Therefore, we have
\begin{equation}
\label{C0}
\sup_{\overline{M_T}} |u| + \sup_{\mathcal{P} M_T} |\nabla u| \leq C.
\end{equation}
For the global gradient estimates, we can prove the following theorem.
\begin{theorem}
\label{gradient}
Suppose that \eqref{f1}, \eqref{f2}, \eqref{f6} and \eqref{c-290'} hold.
Let $u \in C^{3}(\overline {M_T})$ be an
admissible solution of (\ref{eqn}) in $M_T$.
Then
\begin{equation}
\label{gsui-2}
\sup_{\overline{M_T}}|\nabla u| \leq C ( 1 + \sup_{\mathcal{P} M_T}|\nabla u|),
\end{equation}
where $C$ depends on $|\psi|_{C^1 (\overline {M_T})}$,
$|u|_{C^0 (\overline {M_T})}$ and other known data.
\end{theorem}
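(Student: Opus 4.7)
The plan is to apply the parabolic maximum principle to an auxiliary function of the form
\[
W = \log(|\nabla u|^2) + \phi(u),
\]
where $\phi$ is a suitably chosen smooth function of one variable (say $\phi(u) = -A(u - \inf u)$ with $A>0$ a large constant to be fixed later), the point being that $|u|$ is already under control by \eqref{C0}, so a bound on $W$ is equivalent to a bound on $|\nabla u|$. If the maximum of $W$ on $\ol{M_T}$ is attained on $\mathcal{P}M_T$ the claim is immediate; otherwise it is attained at an interior point $(x_0,t_0)\in M_T$, where $\nabla W(x_0,t_0)=0$ and $\mathcal{L}W(x_0,t_0)\leq 0$.

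Work in a normal frame $\{e_i\}$ at $x_0$ diagonalizing $\{U_{ij}\}$ at $(x_0,t_0)$. Differentiating \eqref{eqn0} once in the direction $e_k$ yields $F^{ij}\nabla_k U_{ij} - F^\tau u_{tk} = \psi_k$, and combining this with the Ricci identity on $(M,g)$ gives
\[
\mathcal{L}(|\nabla u|^2) = 2F^{ij} u_{ki} u_{kj} + 2 u_k \psi_k - 2 u_k F^{ij}\nabla_k \chi_{ij} + O(|\nabla u|^2)\sum F^{ii}.
\]
The contribution of $\phi(u)$ to $\mathcal{L}W$ is $\phi'(u)\mathcal{L}u + \phi''(u)F^{ij}u_i u_j$, which via $\mathcal{L}u = \sum_{i=1}^{n+1} f_i \lambda_i - F^{ij}\chi_{ij}$ and the weakened concavity \eqref{c-290'} is controlled above by $CA(1 + \sum F^{ii} + F^\tau)$. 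Using $\nabla W = 0$, one has $\nabla_k|\nabla u|^2 = -\phi'(u)|\nabla u|^2 u_k$, which fed into the gradient-squared term coming from differentiating $\log|\nabla u|^2$ contributes a sign-definite correction of size $(\phi'(u))^2 F^{ij}u_i u_j$.

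The principal obstacle is that some eigenvalues $\hat\lambda_j$ of $U$ may be negative and large in magnitude, in which case writing $u_{ij} = U_{ij} - \chi_{ij}$ risks destroying the positivity of the crucial term $F^{ij}u_{ki}u_{kj}$. This is exactly where condition \eqref{f6} intervenes: whenever $\hat\lambda_j < 0$ one has $f_j \geq \nu_0(1 + \sum f_i)$, so the diagonal entry $f_j \hat\lambda_j^2$ alone dominates the errors produced by the $\chi_{ij}$ cross-terms. Combined with the sign-definite correction just described (extracted by further exploiting $\nabla W = 0$ to control $u_{kj}$ in terms of $\phi'(u)u_k$), the inequality $\mathcal{L}W(x_0,t_0)\leq 0$ reduces, upon choosing $A$ large depending only on $\nu_0$, $K_0$, $|\psi|_{C^1}$, $|u|_{C^0}$, $|\chi|_{C^2}$ and the curvature of $g$, to an estimate of the form $c_0 |\nabla u|^2(x_0, t_0) \leq C$. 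This yields the desired gradient bound.
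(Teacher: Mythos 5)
Your overall scheme (maximum principle for $\log|\nabla u|^2+\phi(u)$, with \eqref{f6} invoked at the interior maximum) is the same as the paper's, but your specific choice $\phi(u)=-A(u-\inf u)$ defeats the mechanism by which \eqref{f6} is supposed to enter. At the maximum point the first-order condition gives, as you note, $\nabla_k(|\nabla u|^2)=-\phi'(u)|\nabla u|^2\nabla_k u$, i.e. $\nabla_j u\,\nabla_{jk}u=\tfrac{A}{2}|\nabla u|^2\nabla_k u$. In a frame diagonalizing $U$ with $e_1$ roughly the gradient direction (say $|\nabla u|\le n\nabla_1 u$), this forces $U_{11}=\tfrac{A}{2}|\nabla u|^2+O(1)>0$ once $|\nabla u|$ is large. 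But \eqref{f6} yields the lower bound $f_j\ge\nu_0(1+\sum f_i)$ only when the corresponding eigenvalue is \emph{negative}; for a large positive eigenvalue the concavity of $f$ suggests, if anything, that $f_1$ is the smallest of the $f_i$, and there is no guarantee that any eigenvalue is negative and large. So the term $f_1\widehat{\lambda}_1^2$ that you want to dominate the errors carries an uncontrollable coefficient $f_1$, and the argument does not close. The paper takes $\phi=\delta v^2$ with $v=u+\sup|u|+1>0$, so that $\phi'>0$ and \eqref{g1} forces $U_{11}<0$ at the maximum — this is exactly what makes \eqref{f6} applicable.

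There is a second, independent problem: with $\phi$ linear you have $\phi''=0$, and the term $(\phi')^2F^{ij}u_iu_j=A^2F^{ij}u_iu_j$ produced by differentiating the logarithm is \emph{not} a sign-definite correction in your favor — in the second-order condition $0\ge F^{ii}\{\nabla_{ii}w/w-(\nabla_iw/w)^2+\nabla_{ii}\phi\}$ it sits on the unfavorable side. The only positive term available to absorb it is $F^{ij}u_{ki}u_{kj}/|\nabla u|^2$, and both quantities scale like $A^2|\nabla u|^2$ times sums of the $f_i$, so enlarging $A$ cannot help and the comparison hinges on constants you do not control (even if $U_{11}<0$, you would need roughly $\nu_0\ge 4$). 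The paper's quadratic $\phi=\delta v^2$ with $\delta$ \emph{small} resolves this: the convexity contributes $2\delta F^{ii}(\nabla_iu)^2$, which absorbs $(\nabla_i\phi)^2F^{ii}=4\delta^2v^2F^{ii}(\nabla_iu)^2$ and leaves the good term $c_0F^{ii}(\nabla_iu)^2$ of \eqref{g7}; since $F^{11}\ge\nu_0(1+\sum F^{ii}+F^\tau)$ and $(\nabla_1u)^2\ge|\nabla u|^2/n^2$, this good term grows like $|\nabla u|^2(1+\sum F^{ii}+F^\tau)$ and beats the remaining errors $C(1+\sum F^{ii}+F^\tau)$. (Incidentally, the term $F^{ij}u_{ki}u_{kj}$ you lean on is simply discarded in the paper, and the $\chi$ cross-terms you identify as the principal obstacle are harmless, being controlled by $|\chi|_{C^0}\sum F^{ii}$.) To repair your proof, replace the linear ansatz by a convex increasing function of $u$ with a small parameter, as in the paper.
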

\begin{proof}
Set
\[W = \sup_{(x, t) \in \overline{M_T}} w e^\phi,\]
where $w = \frac{|\nabla u|^2}{2}$ and $\phi$ is a function to be determined.
It suffices to estimate $W$ and we may assume that $W$ is achieved at
$(x_{0}, t_{0}) \in \overline {M_T} - \mathcal{P} M_T$.
Choose a smooth orthonormal local frame $e_1, \ldots, e_n$
about $x_0$ such that $\nabla_{e_i} e_j = 0$ at $x_0$
and $U (x_0, t_0)$ is diagonal. We see that the function $\log w + \phi$
attains its maximum at $(x_0, t_0)$.
Therefore, at $(x_0, t_0)$, we have
\begin{equation}
\label{g1}
\frac{\nabla_i w}{w} + \nabla_i \phi = 0, \; \mbox{ for each }i = 1, \ldots, n,
\end{equation}
\begin{equation}
\label{g11}
\frac{w_t}{w} + \phi_t \geq 0
\end{equation}
and
\begin{equation}
\label{g2}
\frac{\nabla_{ii} w}{w} - \Big(\frac{\nabla_i w}{w}\Big)^2 + \nabla_{ii} \phi \leq 0.
\end{equation}
Differentiating the equation \eqref{eqn}, we get
\begin{equation}
\label{g4}
F^{ii} \nabla_{k} U_{ii} - F^\tau \nabla_k u_t = \nabla_k \psi \mbox{ for }
   k = 1, \ldots, n
\end{equation}
and
\begin{equation}
\label{g44}
F^{ii} (U_{ii})_t - F^\tau u_{tt} = \psi_t.
\end{equation}
Note that
\begin{equation}
\label{g3}
\nabla_i w = \nabla_k u \nabla_{ik} u, \  w_t = \nabla_k u (\nabla_k u)_t,\
   \nabla_{ii} w = (\nabla_{ik} u)^2 + \nabla_k u \nabla_{iik} u
\end{equation}
and that
\begin{equation}
\label{hess-A70}
 \nabla_{ijk} u - \nabla_{jik} u = R^l_{kij} \nabla_l u.
\end{equation}
We have, by \eqref{g11}, \eqref{g4}, \eqref{g3} and \eqref{hess-A70},
\begin{equation}
\label{g5}
\begin{aligned}
F^{ii} \nabla_{ii} w \geq \,& \nabla_k u F^{ii} \nabla_{iik} u\\
    \geq \,& - C |\nabla u| - C |\nabla u|^2 \sum F^{ii} + F^\tau \nabla_k u \nabla_k u_t\\
       \geq \,& - C |\nabla u| - C |\nabla u|^2 \sum F^{ii} - w F^\tau \phi_t,
\end{aligned}
\end{equation}
provided $|\nabla u|$ is sufficiently large.
Combining \eqref{g1}, \eqref{g2}, \eqref{g5}, we obtain
\begin{equation}
\label{g6}
0 \geq - \frac{C}{|\nabla u|} - C \sum F^{ii} - F^{ii} (\nabla_i \phi)^2 + \mathcal{L} \phi.
\end{equation}
Let $\phi = \delta v^2$, where $v = u + \sup_{\overline{M_T}} |u| + 1$ and
$\delta$ is a small positive constant to be chosen. Thus, choosing
$\delta$ sufficiently small such that $2 \delta - 4 \delta^2 v^2 \geq c_0 > 0$
for some uniform constant $c_0$, by \eqref{c-290'},
\begin{equation}
\label{g7}
\begin{aligned}
\mathcal{L} \phi - F^{ii} (\nabla_i \phi)^2 = \,& 2 \delta v (F^{ii} \nabla_{ii} u - F^\tau u_t)
   + (2 \delta - 4 \delta^2 v^2) F^{ii} (\nabla_i u)^2\\
     \geq \,& - C \delta \Big(1 + \sum F^{ii} + F^\tau\Big) + c_0 F^{ii} (\nabla_i u)^2.
\end{aligned}
\end{equation}
It follows from \eqref{g6} and \eqref{g7} that
\begin{equation}
\label{g8}
c_0 F^{ii} (\nabla_i u)^2 \leq C \Big(1 + \sum F^{ii} + F^\tau\Big),
\end{equation}
provided $|\nabla u|$ is sufficiently large. We may assume
$|\nabla u (x_0, t_0)| \leq n \nabla_1 u (x_0, t_0)$ and by
\eqref{g1},
\[
U_{11} = - 2 \delta v w + \frac{\nabla_k u \chi_{1k}}{\nabla_1 u} < 0
\]
provided $w$ is sufficiently large. Then we can derive from \eqref{f6} that
\[
F^{11} \geq \nu_0 \Big(1 + \sum F^{ii} + F^\tau\Big).
\]
Therefore, we obtain a bound $|\nabla u (x_0, t_0)| \leq C n^2/c_0 \nu_0$
by \eqref{g8} and \eqref{gsui-2} is proved.
\end{proof}
\begin{remark}
We see that in the proof of Theorem \ref{gradient}, we do not need
the existence of $\ul u$.
\end{remark}
By \eqref{C0} and \eqref{gsui-2}, the $C^1$ estimates are established.

\section{estimate for $|u_t|$ }

In this section, we derive the estimate for $|u_t|$.
\begin{theorem}
\label{ut}
Suppose that \eqref{f1}, \eqref{f2}, \eqref{c-290} and \eqref{sub} hold.
Let $u \in C^{3}(\overline {M_T})$ be an
admissible solution of (\ref{eqn}) in $M_T$. Then there
exists a positive constant $C$ depending on $|u|_{C^1 (\ol{M_T})}$,
$|\ul u|_{C^2 (\ol{M_T})}$, $|\psi|_{C^2 (\overline {M_T})}$ and
other known data such that
\begin{equation}
\label{utesti}
\sup_{\ol{M_T}} |u_t| \leq C (1 + \sup_{\mathcal{P} M_T} |u_t|).
\end{equation}
\end{theorem}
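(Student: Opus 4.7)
The plan is to combine the standard maximum-principle machinery for the parabolic linearization of \eqref{eqn0} with the Guan-type dichotomy from Section \ref{gj-P} and the structural hypothesis \eqref{c-290}.

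First I would differentiate \eqref{eqn0} with respect to $t$. Since $\chi$ does not depend on $t$, this produces the linearized identity
\[
\mathcal{L}(u_t) \;=\; F^{ij}\nabla_{ij}u_t \,-\, F^\tau u_{tt} \;=\; \psi_t,
\]
so $|\mathcal{L}(u_t)|\le|\psi|_{C^1(\ol{M_T})}$. The boundary piece of \eqref{utesti} is essentially free: on $SM_T$ one has $u_t=\varphi_t$, while on $BM_T$ the compatibility assumption \eqref{comp-0} together with $u(\cdot,0)=\varphi(\cdot,0)$ and the equation force $u_t(\cdot,0)=\varphi_t(\cdot,0)$. Hence $|u_t|$ is bounded on $\mathcal{P}M_T$.

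For the interior bound I would set up the auxiliary functions $\Psi^\pm:=\pm u_t+A(\ul u-u)$ with a large constant $A$ to be chosen in terms of the known data. Since $\ul u-u\le 0$ in $\ol{M_T}$ with equality on $SM_T$, the values of $\Psi^\pm$ on $\mathcal{P}M_T$ are already controlled by $\sup_{\mathcal{P}M_T}|u_t|$, so the real issue is an interior extremum. Assuming $\Psi^+$ attains its maximum at an interior point $(x_0,t_0)\in M_T\setminus\mathcal{P}M_T$ (the argument for $\Psi^-$ is symmetric), the condition $\mathcal{L}\Psi^+\le 0$ together with $\mathcal{L}u_t=\psi_t$ yields
\[
A\,\mathcal{L}(\ul u-u) \;\le\; -\psi_t \;\le\; C.
\]
If $|\nu_{\mu(x_0,t_0)}-\nu_{\lambda(x_0,t_0)}|\ge\beta$, then \eqref{gj} gives $\mathcal{L}(\ul u-u)\ge\varepsilon(1+\sum F^{ii}+F^\tau)\ge\varepsilon$, contradicting the previous display as soon as $A>C/\varepsilon$, so the interior maximum is ruled out in this case.

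The main obstacle I anticipate is the complementary regime $|\nu_\mu-\nu_\lambda|<\beta$, in which only the weaker relative bound \eqref{Guan} is available, Lemma \ref{G-Lemma} is silent, and the naive max-principle cancellation on $\Psi^+$ gives at best $\psi_t\le 0$ at $(x_0,t_0)$ rather than a bound on $u_t$. To handle this I would modify the auxiliary function so that the max-principle inequality isolates a positive multiple of $F^\tau$ (for instance by replacing $\Psi^+$ with $\Psi^+-Bt$ or by an exponential variant $e^{Bt}\Psi^+$, so that the inequality becomes $A\,\mathcal{L}(\ul u-u)+BF^\tau\le C$). Combining the resulting upper bound on $F^\tau$ at $(x_0,t_0)$ with the lower bound $F^\tau\ge c\sum f_j$ coming from \eqref{Guan}, the structural hypothesis \eqref{c-290} rewritten as $F^{ij}U_{ij}-F^\tau u_t\ge -K_0(\sum F^{ii}+F^\tau)$, and the reverse inequality $\sum f_i\lambda_i\le C\sum f_i$ that follows from concavity and the subsolution bound \eqref{sub}, I expect to pin down $u_t(x_0,t_0)$ pointwise in terms of $|u|_{C^1(\ol{M_T})}$, $|\ul u|_{C^2(\ol{M_T})}$, $|\psi|_{C^2(\ol{M_T})}$ and other known data. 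Running the same argument on $\Psi^-$ produces the lower bound on $u_t$, and together they deliver \eqref{utesti}.
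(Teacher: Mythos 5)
Your treatment of the regime $|\nu_\mu-\nu_\lambda|\ge\beta$ is correct and in fact slightly cleaner than the paper's (the paper runs a logarithmic test function through both cases and therefore needs \eqref{c-290} already in this case, whereas your linear function $\pm u_t+A(\ul u-u)$ dispenses with it there). The gap is exactly where you located it, in the regime $|\nu_\mu-\nu_\lambda|<\beta$, and the fix you sketch does not close it. The inequality you would extract from $\Psi^\pm-Bt$, namely $A\,\mathcal{L}(\ul u-u)+BF^\tau\le C$ and hence $F^\tau\le C/B$ at the maximum point, carries no information about $u_t$: under the hypotheses of the theorem the trace $\sum F^{ii}+F^\tau$ genuinely degenerates as $|\lambda|\to\infty$ along the level set of $f$. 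The sharpest lower bound available from concavity, \eqref{sub} and \eqref{c-290} is precisely the paper's \eqref{dj-9}, which only gives $\sum F^{ii}+F^\tau\ge b_0/(-u_t)$ when $-u_t$ is large; this is perfectly compatible with $F^\tau\le C/B$ for every fixed $B$ once $-u_t\gtrsim B$, so no contradiction and no bound on $-u_t$ results. The auxiliary chain you propose ($F^\tau\ge c\sum f_j$ from \eqref{Guan}, plus the two-sided bound on $\sum f_i\lambda_i$) only yields $|F^{ii}U_{ii}-F^\tau u_t|\le C/B$, and the term $F^{ii}U_{ii}$ is not under control at this stage (second derivatives are estimated later, using the $u_t$ bound), so $-F^\tau u_t$ cannot be isolated. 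A secondary defect: the $-Bt$ (or $e^{Bt}$) modification introduces a factor of $T$ into the final constant, which is fatal since the theorem is applied with $T=\infty$.

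What is actually needed in this regime is a \emph{good term of size} $\delta\,(-u_t)\,F^\tau$ produced by the test function itself. The paper obtains it by maximizing $(-u_t)e^\phi$ with $\phi=\tfrac{\delta^2}{2}|\nabla u|^2+\delta u+b(\ul u-u)$: the $\delta u$ summand contributes $\delta\mathcal{L}u\ni-\delta F^\tau u_t$ to the left-hand side of the maximum-principle inequality, while the quadratic summand $\tfrac{\delta^2}{2}|\nabla u|^2$ contributes $\tfrac{\delta^2}{2}F^{ii}U_{ii}^2$, which is indispensable to absorb both the error $F^{ii}(\nabla_i\phi)^2$ and the first-order term $\delta F^{ii}|U_{ii}|$ left over from $\delta\mathcal{L}u$ (inequalities \eqref{dj-10} and \eqref{dj-14}). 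One then feeds \eqref{Guan} and \eqref{dj-9} into the good term:
\begin{equation*}
-\delta F^\tau u_t\;\ge\;\frac{-\beta\delta u_t}{2\sqrt{n+1}}\Big(\sum F^{ii}+F^\tau\Big)+\frac{\delta\beta b_0}{2\sqrt{n+1}},
\end{equation*}
where the first piece dominates the residual $C\sum F^{ii}$ once $-u_t$ is large and the second is a fixed positive constant beating $C\delta^2$ for $\delta$ small; this is what forces $-u_t(x_0,t_0)\le C$. If you want to keep your additive framework you would have to import both of these summands of $\phi$ (suitably adapted) together with the concavity inequality \eqref{dj-9}; the $-Bt$ term alone cannot substitute for them.
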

\begin{proof}
We first show that
\begin{equation}
\label{utesti1}
\sup_{\ol{M_T}} (- u_t) \leq C (1 + \sup_{\mathcal{P} M_T} (- u_t))
\end{equation}
for which we set
\[
W = \sup_{\overline{M_T}} (- u_t) e^\phi,
\]
where $\phi$ is a function to be chosen. We may assume that $W$ is
attained at $(x_0, t_0) \in \overline{M_T} - \mathcal{P} M_T$. As in
the proof of Theorem \ref{gradient}, we choose an orthonormal local
frame $e_1, \cdots, e_n$ about $x_0$ such that $\nabla_{e_i} e_j = 0$ and
$\{U_{ij} (x_0, t_0)\}$ is diagonal. We may assume $- u_t (x_0, t_0) > 0$.
At $(x_0, t_0)$ where the function $\log (- u_t) + \phi$ achieves its maximum,
we have
\begin{equation}
\label{dj-1}
\frac{\nabla_i u_t}{u_t} + \nabla_i \phi = 0, \mbox{ for each } i = 1, \ldots, n,
\end{equation}
\begin{equation}
\label{dj-2}
\frac{u_{tt}}{u_t} + \phi_t \geq 0,
\end{equation}
and
\begin{equation}
\label{dj-3}
0 \geq F^{ii} \Big\{\frac{\nabla_{ii} u_t}{u_t} - \Big(\frac{\nabla_i u_t}{u_t}\Big)^2
  + \nabla_{ii} \phi\Big\}.
\end{equation}
Combining \eqref{dj-1}, \eqref{dj-2} and \eqref{dj-3}, we find
\begin{equation}
\label{dj-4}
0 \geq \frac{1}{u_t} (F^{ii} \nabla_{ii} u_t - F^\tau u_{tt}) - F^{ii} (\nabla_i \phi)^2
    + \mathcal{L} \phi.
\end{equation}
By \eqref{g44} and \eqref{dj-4},
\begin{equation}
\label{dj-6}
\mathcal{L} \phi \leq - \frac{\psi_t}{u_t} + F^{ii} (\nabla_i \phi)^2.
\end{equation}
Let $\phi =  \frac{\delta^2}{2} |\nabla u|^2 + \delta u + b (\ul u - u)$,
where $\delta \ll b \ll 1$ are positive constants to be determined. By straightforward calculations, we see
\[\begin{aligned}
\nabla_i \phi = & \delta^2 \nabla_k u \nabla_{ik} u + \delta \nabla_i u + b \nabla_i (\ul u - u),\\
\phi_t = & \delta^2 \nabla_k u (\nabla_{k} u)_t + \delta u_t + b (\ul u - u)_t, \\
\nabla_{ii} \phi = & \delta^2 (\nabla_{ik} u)^2 + \delta^2 \nabla_k u \nabla_{iik} u + \delta \nabla_{ii} u + b \nabla_{ii} (\ul u - u).
\end{aligned}\]
It follows that,
in view of \eqref{g4} and \eqref{hess-A70},
\begin{equation}
\label{dj-7}
\begin{aligned}
\mathcal{L} \phi \geq \,& \delta^2 \nabla_k u (F^{ii} \nabla_{iik} u
  - F^\tau (\nabla_k u)_t) + \frac{\delta^2}{2} F^{ii} U_{ii}^2 \\
   & + \delta F^{ii} \nabla_{ii} u - \delta F^\tau u_t - C \delta^2 \sum F^{ii} + b \mathcal{L} (\ul u - u)\\
     \geq \,& - C \delta^2 \Big(1 + \sum F^{ii}\Big)  + \frac{\delta^2}{2} F^{ii} U_{ii}^2
      + \delta \mathcal{L} u + b \mathcal{L} (\ul u - u).
\end{aligned}
\end{equation}
Next,
\begin{equation}
\label{dj-10}
(\nabla_i \phi)^2 \leq C \delta^4 U_{ii}^2 + C b^2.
\end{equation}
Thus, we can derive from \eqref{dj-6}, \eqref{dj-7} and \eqref{dj-10} that
\begin{equation}
\label{dj-8}
\begin{aligned}
b \mathcal{L} (\ul u - u) + \frac{\delta^2}{4} F^{ii} U_{ii}^2
   + \delta \mathcal{L} u
   \leq  - \frac{C}{u_t} + C \delta^2 \Big(1 + \sum F^{ii}\Big) + C b^2 \sum F^{ii},
\end{aligned}
\end{equation}
when $\delta$ is small enough.
Now we use the idea of \cite{G2} to consider two cases: (i) $|\nu_{\mu_0} - \nu_{\lambda_0}| \geq \beta$
and (ii) $|\nu_{\mu_0} - \nu_{\lambda_0}| < \beta$, where $\mu_0 = \mu (x_0, t_0)$
and $\lambda_0 = \lambda (x_0, t_0)$.

In case (i), by Lemma \ref{G-Lemma}, we see that \eqref{gj} holds.
By \eqref{c-290}, we have
\begin{equation}
\label{con}
\mathcal{L} u \geq F^{ii} U_{ii} - F^\tau u_t - C \sum F^{ii} \geq
    - K_0 \Big(\sum F^{ii} + F^{\tau}\Big) - C \sum F^{ii}.
\end{equation}
Combining with \eqref{con} and \eqref{dj-8}, we have
\begin{equation}
\label{dj-11}
\begin{aligned}
 b \mathcal{L} (\ul u - u)
   \leq  - \frac{C}{u_t} + C \delta \Big(1 + \sum F^{ii} + F^\tau\Big)
     + C b^2 \sum F^{ii}.
\end{aligned}
\end{equation}
Now using \eqref{gj} we can choose $\delta \ll b \ll 1$ to
obtain a bound $- u_t (x_0, t_0) \leq \frac{C}{b \varepsilon}$.

In case (ii), we see that \eqref{Guan} holds.
By \eqref{dj-8}, we have
\begin{equation}
\label{dj-13}
\begin{aligned}
b \mathcal{L} (\ul u - u) + & \frac{\delta^2}{4} F^{ii} U_{ii}^2
   + \delta (F^{ii} U_{ii} - F^\tau u_t)\\
   \leq & - \frac{C}{u_t} + C \delta^2 \Big(1 + \sum F^{ii}\Big) + C (\delta + b^2 ) \sum F^{ii}.
\end{aligned}
\end{equation}
Note that
\begin{equation}
\label{dj-14}
\frac{\delta^2}{4} F^{ii} U_{ii}^2
\geq \delta F^{ii} |U_{ii}| -  \sum F^{ii}
\end{equation}
and
\begin{equation}
\label{dj-5}
\mathcal{L} (\ul u - u) \geq 0
\end{equation}
by the concavity of $F$.
Therefore, by \eqref{dj-13}, \eqref{dj-14} and \eqref{dj-5}, we have
\begin{equation}
\label{dj-12}
\begin{aligned}
 - \delta F^\tau u_t
   \leq - \frac{C}{u_t} + C \delta^2 + C \sum F^{ii}.
\end{aligned}
\end{equation}
By \eqref{c-290}, similar to \cite{G2},
\begin{equation}
\label{dj-9}
\begin{aligned}
- u_t \Big(\sum F^{ii} + F^\tau\Big)
     \geq \,& f (- u_t {\bf 1}) - f (\lambda (U), - u_t)
        + \sum F^{ii} U_{ii} - F^\tau u_t\\
     \geq \,& f (- u_t {\bf 1}) - f (\lambda (\ul U), - \ul u_t)
       - K_0 \Big(\sum F^{ii} + F^\tau\Big)\\
     \geq \,& 2 b_0 + u_t \Big(\sum F^{ii} + F^\tau\Big)
\end{aligned}
\end{equation}
for some uniform constant $b_0 > 0$, provided $- u_t$ is sufficiently large, where
${\bf 1} = (1, \ldots, 1) \in \mathbb{R}^{n + 1}$. It follows that, by \eqref{Guan},
\[
- F^\tau u_t \geq \frac{- \beta u_t}{\sqrt{n+1}} \Big(\sum F^{ii} + F^\tau\Big)
\geq \frac{- \beta u_t}{2 \sqrt{n+1}} \Big(\sum F^{ii} + F^\tau\Big)
  + \frac{\beta b_0}{2 \sqrt{n+1}}.
\]
Choose $\delta$ sufficiently small such that
\[
\frac{\beta b_0 \delta}{2 \sqrt{n+1}} - C \delta^2 \geq c_1 > 0
\]
for some constant $c_1$.
Therefore, we can derive from \eqref{dj-12} that
\[
- u_t (x_0, t_0) \leq \max\Big\{\frac{C \sqrt{n+1}}{\beta \delta},
 \frac{C}{c_1}\Big\}.
\]
So \eqref{utesti1} holds.

Similarly, we can show
\begin{equation}
\label{utesti2}
\sup_{\overline{M_T}} u_t \leq C (1 + \sup_{\mathcal{P} M_T} u_t)
\end{equation}
by setting
\[
W = \sup_{\bM_T} u_t e^\phi
\]
and $\phi = \frac{\delta^2}{2} |\nabla u|^2 - \delta u + b (\ul u - u)$.

Combining \eqref{utesti1} and \eqref{utesti2}, we can see that \eqref{utesti} holds.
\end{proof}

Since $u_t = \varphi_t$ on $S M_T$ and \eqref{comp-0}, we can derive the estimate
\begin{equation}
\label{ut0}
\sup_{\ol{M_T}} |u_t| \leq C.
\end{equation}

\section{Global estimates for second order derivatives}

In this section, we derive the global estimates for the second order derivatives.
We prove the following maximum principle.
\begin{theorem}
\label{dj-th1}
Let $u \in C^{4} (\overline{M_T})$ be an
admissible solution of (\ref{eqn}) in $M_T$.
Suppose that \eqref{f1}, \eqref{f2} and \eqref{sub} hold.
Then
\begin{equation}
\label{gsui-1}
\sup_{\overline {M_T}} |\nabla^2 u| \leq C (1 + \sup_{\mathcal{P} M_T} |\nabla^2 u|),
\end{equation}
where $C > 0$ depends on $|u|_{C^1 (\overline {M_T})}$,
$|u_t|_{C^0(\overline {M_T}))}$, $|\psi|_{C^{2}(\overline {M_T}))}$
and other known data.
\end{theorem}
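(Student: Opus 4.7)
The plan is to run a maximum principle argument on the largest eigenvalue of $\nabla^2 u$ multiplied by an exponential weight, mimicking the structure of the proof of Theorem \ref{ut} and Guan's approach in \cite{G2}. Concretely, I would set
\[
W = \sup_{(x,t) \in \ol{M_T}} \max_{\xi \in T_x M, |\xi| = 1} \nabla_{\xi\xi} u (x,t) \, e^{\phi},
\]
with auxiliary weight $\phi = \tfrac{\delta^2}{2} |\nabla u|^2 + b(\ul u - u)$, where $\delta \ll b \ll 1$ are small positive constants to be chosen, and assume $W$ is attained at some $(x_0, t_0) \in \ol{M_T} \setminus \mathcal{P} M_T$. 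Choose an orthonormal frame $e_1,\dots,e_n$ at $x_0$ diagonalizing $U(x_0,t_0)$ with $\nabla_{e_i} e_j = 0$ at $x_0$ so that the maximum eigenvalue is $U_{11} = \nabla_{11} u + \chi_{11}$; it suffices to bound $U_{11}(x_0,t_0)$.

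Next I would write down the first- and second-order conditions at $(x_0, t_0)$ for $\log \nabla_{11} u + \phi$, namely
\[
\frac{\nabla_i \nabla_{11} u}{\nabla_{11} u} + \nabla_i \phi = 0,\quad
\frac{(\nabla_{11} u)_t}{\nabla_{11} u} + \phi_t \geq 0,\quad
F^{ii}\Big\{\frac{\nabla_{ii}\nabla_{11} u}{\nabla_{11} u} - \Big(\frac{\nabla_i \nabla_{11} u}{\nabla_{11} u}\Big)^2 + \nabla_{ii}\phi\Big\} \leq 0,
\]
then differentiate \eqref{eqn0} twice in the $e_1$ direction. Using concavity of $F$ in $(A,\tau)$, the resulting second-derivative term satisfies
\[
F^{ij,kl} \nabla_1 U_{ij} \nabla_1 U_{kl} + 2 F^{ij,\tau} \nabla_1 U_{ij} \nabla_1 u_t + F^{\tau\tau}(\nabla_1 u_t)^2 \leq 0,
\]
so $F^{ii} \nabla_{11} U_{ii} - F^{\tau} (u_t)_{11} \geq \nabla_{11}\psi$. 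Commuting covariant derivatives via \eqref{hess-A70} (and its second-order analogue, introducing curvature terms of the form $\nabla_{ii} U_{11} - \nabla_{11} U_{ii} = O(U_{11} + 1)$) and combining with the structural identities $\mathcal{L} u$ and $\mathcal{L}(\ul u - u)$ expanded exactly as in \eqref{dj-7}--\eqref{dj-10}, everything reduces to an inequality of the shape
\[
b \, \mathcal{L}(\ul u - u) + \tfrac{\delta^2}{4} F^{ii} U_{ii}^2 \leq C\big(1 + U_{11}\big)\Big(1 + \sum F^{ii} + F^{\tau}\Big) + C\,b^2 \sum F^{ii}
\]
at $(x_0, t_0)$, up to terms absorbable for $\delta$ small.

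Now I would dichotomize exactly as in the proof of Theorem \ref{ut}: write $\mu_0 = \mu(x_0,t_0)$, $\lambda_0 = \lambda(x_0,t_0)$ and consider the two cases $|\nu_{\mu_0} - \nu_{\lambda_0}| \geq \beta$ and $|\nu_{\mu_0} - \nu_{\lambda_0}| < \beta$. In Case (i), Lemma \ref{G-Lemma} yields \eqref{gj}, i.e.\ $\mathcal{L}(\ul u - u) \geq \varepsilon(1 + \sum F^{ii} + F^\tau)$, and choosing $\delta \ll b \ll 1$ forces $U_{11}(x_0,t_0)$ to be bounded. In Case (ii), the estimate \eqref{Guan} shows all $F^{ii}$ are uniformly comparable to $\sum F^{jj}$, so the term $\tfrac{\delta^2}{4} F^{ii} U_{ii}^2 \geq \tfrac{\delta^2}{4 n} F^{11} U_{11}^2$ together with $\mathcal{L}(\ul u - u) \geq 0$ (by concavity of $F$, as used in \eqref{dj-5}) dominates the right-hand side once $U_{11}$ is large, again yielding a bound.

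The main obstacle I expect is controlling the commutator terms that arise when interchanging $\nabla_{11}$ and $\nabla_{ii}$ on $u$: these produce curvature-coupled expressions of the form $R \cdot \nabla^2 u$ that scale linearly in $U_{11}$ and are weighted by $F^{ii}$, so they cannot be absorbed into $C \sum F^{ii}$ directly without using the good $F^{ii} U_{ii}^2$ term. The delicate balance is choosing $\delta, b$ small enough that the auxiliary gradient and subsolution terms dominate these curvature contributions, while still keeping $\delta$ large relative to $U_{11}^{-1}$ so that the concavity-derived $F^{ii} U_{ii}^2$ term overwhelms the $C(1+U_{11})\sum F^{ii}$ on the right. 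Once this balance is achieved the proof closes, and \eqref{gsui-1} follows because at an interior maximum $U_{11}$ — and hence $|\nabla^2 u|$ — is a priori bounded.
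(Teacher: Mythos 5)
Your overall architecture (maximize the largest eigenvalue times $e^{\phi}$ with $\phi$ built from $|\nabla u|^2$ and $\ul u - u$, then dichotomize on $|\nu_{\mu_0}-\nu_{\lambda_0}|$ via Lemma \ref{G-Lemma} and \eqref{Guan}) matches the paper, but there is a genuine gap at the heart of the argument: you discard the entire second-derivative quadratic form by concavity and propose to control the remaining positive term $F^{ii}\bigl(\nabla_i U_{11}/U_{11}\bigr)^2 = F^{ii}(\nabla_i\phi)^2$ exactly as in \eqref{dj-10}, which yields $C\delta^4 F^{ii}U_{ii}^2 + Cb^2\sum_i F^{ii}$ \emph{summed over all indices} $i$. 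The piece $Cb^2\sum_{i}F^{ii}$ coming from indices where $|U_{ii}|$ is small (not comparable to $U_{11}$) cannot be absorbed into the good term $\delta^2 F^{ii}U_{ii}^2$, and this is precisely why the paper does not throw the concave form away: it invokes Lemma \ref{AG-Lemma} (Andrews--Gerhardt) and Urbas's partition into $J=\{i: 3U_{ii}\le -U_{11}\}$ and $K=\{i: 3U_{ii}>-U_{11}\}$ to obtain \eqref{gs8}, where the $(\nabla_i\phi)^2$ contributions survive only for $i\in J$ (where $|U_{ii}|\ge U_{11}/3$ makes them absorbable) or come weighted by $F^{11}$ (absorbable since $F^{11}U_{11}^2$ sits in the good term). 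The off-diagonal negative part $\sum_{i\neq j}\frac{f_i-f_j}{\lambda_i-\lambda_j}\eta_{ij}^2$ of the Hessian of $F$ is doing real work here; concavity alone loses it.

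A second, related problem is your choice $\delta\ll b\ll 1$. Unlike the $u_t$ estimate, where every bad term in \eqref{dj-11} carries a small factor $\delta$ or $b^2$, here the curvature commutators \eqref{chu2} produce, after dividing by $U_{11}$, a term $C\sum F^{ii}$ with $C$ a \emph{fixed} constant determined by the curvature of $M$ (and by $|\nabla_{11}\psi|$). In case (i), \eqref{gj} only supplies $b\varepsilon\bigl(1+\sum F^{ii}+F^\tau\bigr)$ on the good side, so one must take $b$ \emph{large} (with $b\varepsilon$ exceeding that fixed $C$), and then $\delta$ small and $U_{11}^2\gtrsim b^2/\delta$ as in \eqref{gs12}; with $b\ll 1$ your case (i) simply does not close. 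Taking $b$ large is in turn only consistent because of the $J$/$K$ splitting above. Finally, your ``reduced inequality'' with $C(1+U_{11})\bigl(1+\sum F^{ii}+F^\tau\bigr)$ on the right is too weak as written — the whole point of working with $\log U_{11}+\phi$ is that after dividing \eqref{gs2} and the commutator terms by $U_{11}$ the right-hand side stays bounded independently of $U_{11}$; a right-hand side growing linearly in $U_{11}$ cannot be beaten by $F^{ii}U_{ii}^2\ge F^{11}U_{11}^2$ in general, since $F^{11}$ (the derivative in the direction of the largest eigenvalue) may be the smallest of the $f_i$. Your case (ii) is essentially correct, modulo noting that the constant term must be handled via the lower bound $\sum F^{ii}+F^\tau\gtrsim b_0/|\widehat{\lambda}|$ from \eqref{gs13}.
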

\begin{proof}
Set
\[W = \max_{(x,t) \in \overline{M_T}} \max_{\xi \in T_x M, |\xi| = 1}
     (\nabla_{\xi\xi} u + \chi(\xi, \xi)) e^\phi,\]
where $\phi$ is a function to be determined.
We may assume $W$ is achieved at $(x_{0}, t_{0}) \in \ol{M_T} - \mathcal{P} M_T$
and $\xi_0 \in T_{x_0} M$.
Choose a smooth orthonormal local frame $e_{1}, \ldots, e_{n}$ about $x_{0}$
as before
such that $\xi_0 = e_1$, $\nabla_{e_i} e_j = 0$,  and
$\{U_{ij} (x_0, t_0)\}$ is diagonal. We see that
$W = U_{11} (x_0, t_0) e^{\phi (x_0, t_0)}$.
We may also assume that $U_{11} \geq \ldots \geq U_{nn}$  at $(x_0, t_0)$.

Since the function $\log (U_{11}) + \phi$ attains its maximum at $(x_0, t_0)$,
we have, at $(x_0, t_0)$,
\begin{equation}
\label{gs3}
\frac{\nabla_i U_{11}}{U_{11}} + \nabla_i \phi = 0
    \mbox{ for each } i = 1, \ldots, n,
\end{equation}
\begin{equation}
\label{gs4}
\frac{(\nabla_{11} u)_t}{U_{11}} + \phi_t \geq 0,
\end{equation}
and
\begin{equation}
\label{gs5}
0 \geq \sum_i F^{ii} \Big\{\frac{\nabla_{ii} U_{11}}{U_{11}}
   - \Big(\frac{\nabla_i U_{11}}{U_{11}}\Big)^2 + \nabla_{ii} \phi\Big\}.
\end{equation}
Therefore, by \eqref{gs4} and \eqref{gs5}, we find
\begin{equation}
\label{gs6}
\mathcal{L} \phi \leq - \frac{1}{U_{11}} (F^{ii} \nabla_{ii} U_{11} - F^\tau (\nabla_{11} u)_t)
   + F^{ii} \Big(\frac{\nabla_i U_{11}}{U_{11}}\Big)^2.
\end{equation}
By the formula
\begin{equation}
\label{hess-A80}
\begin{aligned}
\nabla_{ijkl} v - \nabla_{klij} v
= & R^m_{ljk} \nabla_{im} v  + \nabla_i R^m_{ljk} \nabla_m v
      + R^m_{lik} \nabla_{jm} v \\
  & + R^m_{jik} \nabla_{lm} v
      + R^m_{jil} \nabla_{km} v + \nabla_k R^m_{jil} \nabla_m v
\end{aligned}
\end{equation}
we have
\begin{equation}
\label{chu2}
  \begin{aligned}
   \nabla_{ii}U_{11} \geq \nabla_{11} U_{ii} - C U_{11},
\end{aligned}
\end{equation}
Differentiating equation (\ref{eqn}) twice, we have
\begin{equation}
\begin{aligned}
\label{gs2}
& F^{ij} \nabla_{11} U_{ij} - F^\tau \nabla_{11} u_t
+ F^{ij,kl} \nabla_1 U_{ij} \nabla_1 U_{kl}\\
+ & F^{\tau\tau} (\nabla_1 u_t)^2 - 2 F^{ij, \tau} \nabla_1 U_{ij} \nabla_1 u_t
  = \nabla_{11} \psi \geq - C.
\end{aligned}
\end{equation}
It follows from \eqref{gs6}, \eqref{chu2} and \eqref{gs2} that
\begin{equation}
\label{gs11}
 \mathcal{L} \phi \leq \frac{C}{U_{11}} + C \sum F^{ii} + E,
\end{equation}
where
\[
E = \frac{1}{U_{11}} \Big(F^{ij,kl} \nabla_1 U_{ij} \nabla_1 U_{kl}
 - 2 F^{ij, \tau} \nabla_1 U_{ij} \nabla_1 u_t + F^{\tau\tau} (\nabla_1 u_t)^2\Big)
   + F^{ii} \Big(\frac{\nabla_{i} U_{11}}{U_{11}}\Big)^2.
\]
$E$ can be estimated as in \cite{G} using an idea of Urbas \cite{U} to which
the following inequality proved by Andrews \cite{A} and Gerhardt \cite{GC} is crucial.
\begin{lemma}
\label{AG-Lemma}
For any symmetric matrix $\eta=\{\eta_{ij}\}$ we have
\[
F^{ij,kl}\eta_{ij}\eta_{kl}=\sum_{i,j}\frac{\partial^{2}f}{\partial\lambda_{i}\partial\lambda_{j}}\eta_{ii}\eta_{jj}
   +\sum_{i\neq j}\frac{f_{i}-f_{j}}{\lambda_{i}-\lambda_{j}}\eta_{ij}^{2}.
\]
The second term on the right hand side is nonpositive if $f$ is concave, and is interpreted as a limit if $\lambda_{i}=\lambda_{j}$.
\end{lemma}
Similar to \cite{G}, we can derive (see \cite{BD} also)
\begin{equation}
\label{gs8}
 E \leq \sum_{i \in J} F^{ii} (\nabla_i \phi)^2
   + C \sum_{i \in K} F^{ii} + C F^{11} \sum_{i \in K} (\nabla_i \phi)^2,
\end{equation}
where $J = \{i: 3 U_{ii} \leq - U_{11}\}$ and
$K = \{i:  3 U_{ii} > - U_{11}\}$.

Let
\[
\phi = \frac{\delta |\nabla u|^2}{2} + b (\ul u - u),
\]
where $\delta$ and $b$ are positive constants to be determined.
Thus, we can derive from \eqref{gs8} that
\begin{equation}
\label{gs88}
 E \leq C b^2 \sum_{i \in J} F^{ii}  + C \delta^2 F^{ii} U_{ii}^2
            +  C \sum_{i \in K} F^{ii} + C (\delta^2 U_{11}^2 + b^2) F^{11}.
\end{equation}

On the other hand, by \eqref{g4} and \eqref{hess-A70},
\begin{equation}
\label{gs9}
\begin{aligned}
\mathcal{L} \phi = \,& \delta F^{ii} \sum_k (\nabla_{ik} u)^2 + \delta \nabla_k u F^{ii} \nabla_{iik} u
  - \delta \nabla_k u F^\tau (\nabla_k u)_t + b \mathcal{L} (\ul u - u)\\
     \geq \,& \delta F^{ii} U_{ii}^2 + b \mathcal{L} (\ul u - u) - C \delta \Big(1 + \sum F^{ii}\Big)
\end{aligned}
\end{equation}
Combining \eqref{gs11}, \eqref{gs88} and \eqref{gs9}, we obtain
\begin{equation}
\label{gs10}
\frac{\delta}{2} F^{ii} U_{ii}^2 + b \mathcal{L} (\ul u - u)
   \leq \frac{C}{U_{11}} + C b^2 \sum_{i \in J} F^{ii} + C b^2 F^{11} + C \Big(1 + \sum F^{ii}\Big)
\end{equation}
provided $\delta$ is sufficiently small. Note that $|U_{jj}| \geq \frac{1}{3} U_{11}$, for
$j \in J$. Therefore, by \eqref{gs10}, we have
\begin{equation}
\label{gs12}
\frac{\delta}{4} F^{ii} U_{ii}^2 + b \mathcal{L} (\ul u - u)
   \leq C \Big(1 + \sum F^{ii}\Big)
\end{equation}
when $U^2_{11} \geq \max\{C b^2/\delta, 1\}$.

Now let $\mu_0 = \mu (x_0, t_0)$ and $\lambda_0 = \lambda (x_0, t_0)$.
If $|\lambda_0 - \mu_0| \geq \beta$, we can obtain a bound of
$U_{11} (x_0, t_0)$ by \eqref{gj} as in \cite{G}.

If $|\lambda_0 - \mu_0| < \beta$, we see that \eqref{Guan} holds.
Let $\widehat{\lambda} = \lambda (U (x_0, t_0))$. We may assume
$|\widehat{\lambda}| \geq |u_t (x_0, t_0)|$. Similar
to \cite{G2}, by the concavity of $f$,
\begin{equation}
\label{gs13}
\begin{aligned}
|\widehat{\lambda}| \Big(\sum F^{ii} + F^\tau\Big)
     \geq \,& f (|\widehat{\lambda}| {\bf 1}) - f (\lambda (U), - u_t)
        + \sum F^{ii} U_{ii} - F^\tau u_t\\
     \geq \,& f (|\widehat{\lambda}| {\bf 1}) - f (\lambda (\ul U), - \ul u_t)
       - |\widehat{\lambda}| \Big(\sum F^{ii} + F^\tau\Big)\\
     \geq \,& 2 b_0 - |\widehat{\lambda}| \Big(\sum F^{ii} + F^\tau\Big)
\end{aligned}
\end{equation}
for some uniform positive constant $b_0$, provided $|\widehat{\lambda}|$
is sufficiently large. By \eqref{Guan}, \eqref{dj-5} and \eqref{gs12},
we see that
\begin{equation}
\label{gs14}
2 c_0 |\widehat{\lambda}|^2 \Big(\sum F^{ii} + F^\tau\Big)
   \leq C \Big(1 + \sum F^{ii}\Big),
\end{equation}
where
\[
c_0 := \frac{\delta \beta}{8\sqrt {n + 1}}.
\]
Then we can derive a bound of $|\widehat{\lambda}|$ from \eqref{gs13}.
\end{proof}

\section{Boundary estimates for second order derivatives}

In this section, we consider the estimates of second order derivatives on
$S M_T$. We may assume
$\varphi \in C^{4} (\overline{M_T})$.
For simplicity we shall make use of the condition \eqref{c-290'} though stronger
results may be proved (see \cite{G}, \cite{G2} and \cite{GJ2}).

The pure tangential second derivatives are easy to estimate from the
boundary condition $u = \varphi$ on $\mathcal{P} M_T$. So we are
focused on the estimates for mixed tangential-normal and pure normal second
derivatives.

Fix a point $(x_{0}, t_{0}) \in S M_T$. We shall choose
smooth orthonormal local frames $e_1, \ldots, e_n$ around $x_0$ such that
when restricted to $\partial M$, $e_n$ is normal to $\partial M$.
%Since $u - \ul{u} = 0$ on $S M_T$, we have
%\begin{equation}
%\label{hess-a200}
%\nabla_{\alpha \beta} (u - \ul{u})
% = -  \nabla_n (u - \ul{u}) \varPi (e_{\alpha}, e_{\beta}), \;\;
%\forall \; 1 \leq \alpha, \beta < n \;\;
%\mbox{on  $SM_T$},
%\end{equation}
%where %$\nu$ is the unit normal to $\partial M$ and
%$\varPi$ denotes the second fundamental form of $\partial M$.
%Therefore,
%\begin{equation}
%\label{hess-E130}
%|\nabla_{\alpha \beta} u| \leq  C,  \;\; \forall \; 1 \leq \alpha, \beta < n
%\;\;\mbox{on} \;\; S M_T.
%\end{equation}

Let $\rho (x)$ and $d (x)$ denote the distance from $x \in M$ to $x_{0}$
and $\partial M$ respectively and set
\[M_T^{\delta} = \{X = (x, t) \in M \times (0,T]:
      \rho (x) < \delta \}.\]
We shall use the following barrier function as in \cite{G}.
\begin{equation}
\label{hess-E176}
 \varPsi
   = A_1 v + A_2 \rho^2 - A_3 \sum_{\gamma < n} |\nabla_{\gamma} (u - \varphi)|^2,
             % \pm \nabla_{\alpha} (u - \varphi).
\end{equation}
where
\[v = (u - \underline{u}) + ad - \frac{Nd^{2}}{2}.\]

Now we show the following lemma which is useful to construct barrier functions (see Lemma
\ref{lem1}).
\begin{lemma}
\label{lem2}
Suppose \eqref{f2} and \eqref{c-290'} hold. Then for any $\sigma > 0$ and
any index $r$,
\begin{equation}
\label{djs-1}
\sum f_i |\widehat{\lambda}_i| \leq \sigma \sum_{i \neq r} f_i \widehat{\lambda}^2_i
   + \frac{C}{\sigma} \Big(\sum f_i + F^\tau\Big) + C
\end{equation}
\end{lemma}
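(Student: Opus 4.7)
The plan is to split $\sum_{i=1}^n f_i|\widehat{\lambda}_i|$ into the contribution from indices $i\neq r$, which is amenable to a direct Cauchy--Schwarz inequality, and the single term $f_r|\widehat{\lambda}_r|$, which must be treated algebraically since the quadratic sum on the right-hand side omits the index $r$. First I would apply the elementary estimate
\[
f_i|\widehat{\lambda}_i|\leq \tfrac{\sigma}{2}\, f_i\widehat{\lambda}_i^{\,2}+\tfrac{1}{2\sigma}\, f_i
\]
for every $i\neq r$ and sum, which immediately produces a term of the exact shape $\sigma\sum_{i\neq r}f_i\widehat{\lambda}_i^{\,2}+\frac{C}{\sigma}\sum f_i$ demanded by the statement.

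The rest of the work is to absorb $f_r|\widehat{\lambda}_r|$ into the right-hand side. The key trick is the identity
\[
f_r|\widehat{\lambda}_r|=\pm\Bigl(\sum_{i=1}^n f_i\widehat{\lambda}_i-\sum_{i\neq r}f_i\widehat{\lambda}_i\Bigr),
\]
where the sign is chosen according to $\mathrm{sgn}(\widehat{\lambda}_r)$. The second sum on the right is again controlled by the Cauchy--Schwarz step above, so everything reduces to establishing the two-sided bound
\[
\Bigl|\sum_{i=1}^n f_i\widehat{\lambda}_i\Bigr|\leq C\Bigl(1+\sum f_i+F^\tau\Bigr).
\]
For the lower direction, I would invoke \eqref{c-290'} on the admissible vector $(\widehat{\lambda},-u_t)\in\Gamma$, using the bound $|u_t|\leq C$ established in Theorem~\ref{ut} to replace the $F^\tau(-u_t)$ term by at most a multiple of $F^\tau$. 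For the upper direction I would use the concavity \eqref{f2}: fix any reference point $\mu_0\in\Gamma$ with bounded coordinates (for example $\mu_0=(1,\ldots,1)\in\Gamma_{n+1}\subseteq\Gamma$) and exploit
\[
f(\mu_0)\leq f(\lambda)+\sum f_i(\mu_{0,i}-\widehat{\lambda}_i)+F^\tau(\mu_{0,n+1}+u_t),
\]
which rearranges to $\sum f_i\widehat{\lambda}_i\leq C(1+\sum f_i+F^\tau)$ because $f(\lambda)=\psi$, $f(\mu_0)$, the components of $\mu_0$, and $|u_t|$ are all bounded.

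The hard part is really the ``missing index'' issue: the statement requires that the quadratic sum on the right exclude $i=r$, ruling out a one-shot Cauchy--Schwarz bound applied to the entire sum. Routing around this forces one to pass through $\sum_{i=1}^n f_i\widehat{\lambda}_i$, and it is precisely at that step that \eqref{c-290'} enters to give the lower bound while the concavity \eqref{f2} enters to give the upper bound, so both hypotheses of the lemma are genuinely used. Once these two one-sided bounds are in hand, combining them with the Cauchy--Schwarz estimate and consolidating constants yields the claimed inequality for any $\sigma>0$.
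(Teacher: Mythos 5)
Your proof is correct, and it uses the same essential ingredients as the paper's: Cauchy--Schwarz on the indices $i \neq r$, condition \eqref{c-290'} for a lower bound on $\sum f_i \widehat{\lambda}_i$, and concavity \eqref{f2} for an upper bound. The packaging differs slightly. The paper splits into the two cases $\widehat{\lambda}_r < 0$ and $\widehat{\lambda}_r \geq 0$ and in each case needs only \emph{one} of the two one-sided bounds (the terms of the opposite sign automatically avoid the index $r$ and are absorbed by Cauchy--Schwarz), whereas you prove the two-sided bound $|\sum f_i \widehat{\lambda}_i| \leq C(1 + \sum f_i + F^\tau)$ once and for all via the triangle inequality $f_r|\widehat{\lambda}_r| \leq |\sum_i f_i\widehat{\lambda}_i| + \sum_{i\neq r} f_i|\widehat{\lambda}_i|$, avoiding the case distinction. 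A second, more substantive difference: in the concavity step the paper compares $\lambda$ with $\mu = (\lambda(\ul U), -\ul u_t)$, i.e.\ it invokes the subsolution \eqref{sub}, which is not among the stated hypotheses of the lemma; you instead compare with the fixed point $\mathbf{1} \in \Gamma_{n+1} \subseteq \Gamma$, which uses only \eqref{f2} together with the bounds on $\psi$ and $u_t$, so your argument actually matches the lemma's hypotheses more faithfully. One cosmetic point common to both proofs: the terms $K_0\sum f_i$, $CF^\tau$ produced by \eqref{c-290'} and by $|u_t| \leq C$ carry no factor $1/\sigma$, so the bound as stated in \eqref{djs-1} is literally obtained only for $\sigma$ bounded above; this is harmless since in the application $\sigma = c_0/4$ is a fixed small constant, but it is worth being aware that the "for any $\sigma>0$" should be read with the constant $C$ independent of small $\sigma$.
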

\begin{proof}
If $\widehat{\lambda}_r < 0$, by \eqref{c-290'}, we see
\[
\begin{aligned}
\sum f_i |\widehat{\lambda}_i| = \,& 2 \sum_{\widehat{\lambda}_i > 0} f_i \widehat{\lambda}_i
   - \sum f_i \widehat{\lambda}_i + F^\tau u_t - F^\tau u_t\\
     \leq \,& \sigma \sum_{\widehat{\lambda}_i > 0} f_i \widehat{\lambda}^2_i + \frac{1}{\sigma} \sum_{\widehat{\lambda}_i > 0} f_i
        + K_0 \Big(1 + \sum f_i + F^\tau\Big) + C F^\tau
\end{aligned}
\]
and \eqref{djs-1} follows.

If $\widehat{\lambda}_r \geq 0$, by the concavity of $f$,
\[
\begin{aligned}
\sum f_i |\widehat{\lambda}_i| = \,& \sum f_i \widehat{\lambda}_i
    - 2 \sum_{\widehat{\lambda}_i < 0} f_i \widehat{\lambda}_i \\
     \leq \,& \sigma \sum_{\widehat{\lambda}_i < 0} f_i \widehat{\lambda}^2_i + \frac{1}{\sigma} \sum_{\widehat{\lambda}_i < 0} f_i
        + \sum f_i \widehat{\mu}_i - F^\tau (\ul u - u)_t\\
          \leq \,& \sigma \sum_{\widehat{\lambda}_i < 0} f_i \widehat{\lambda}^2_i + \frac{1}{\sigma} \sum_{\widehat{\lambda}_i < 0} f_i
              + C \Big(\sum f_i + F^\tau\Big).
\end{aligned}
\]
Then \eqref{djs-1} is proved.
\end{proof}

The following Lemma is crucial to our estimates and the idea is
mainly from \cite{G2} and \cite{GJ} (see \cite{GSS} also).
\begin{lemma}
\label{lem1}
Suppose that \eqref{f1}, \eqref{f2} and \eqref{sub} hold. Then for any constant
$K > 0$, there exist uniform positive constants $a, \delta$ sufficiently small,
and $A_1, A_2, A_3, N$ sufficiently large such that $\varPsi \geq K (d + \rho^2)$
in $\overline{M_T^\delta}$ and
\begin{equation}
\label{sb-L}
\mathcal{L} \varPsi \leq - K \Big(1 + \sum_{i = 1}^n f_i |\widehat{\lambda}_i|
    + \sum_{i = 1}^n f_i+ F^\tau \Big) \mbox{ in }M_T^{\delta}.
\end{equation}
\end{lemma}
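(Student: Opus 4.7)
I would work in a half-neighborhood $M_T^\delta$ of $(x_0,t_0)\in SM_T$ with an orthonormal frame $e_1,\ldots,e_n$ such that $e_n$ is normal to $\partial M$, and fix the six constants in the hierarchical order $A_3 \to a \to A_1 \to N \to \delta \to A_2$. The task splits into two parts: the differential inequality \eqref{sb-L} in $M_T^\delta$ and the pointwise bound $\varPsi\geq K(d+\rho^2)$ on $\overline{M_T^\delta}$; once \eqref{sb-L} is known, the latter reduces to verifying $\varPsi\geq K(d+\rho^2)$ on the parabolic boundary of $M_T^\delta$ (an easy check after $A_2$ is chosen large relative to $A_3$) and comparing with $K(d+\rho^2)$, whose $\mathcal{L}$ is trivially dominated, via the maximum principle. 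The key mechanism for \eqref{sb-L} is the pointwise dichotomy from Section~2: at each $X\in M_T^\delta$ either (i) $|\nu_{\mu(X)}-\nu_{\lambda(X)}|\geq\beta$ holds, so \eqref{gj} produces a coercive $-A_1\mathcal{L}(\underline u-u)$ term, or (ii) $|\nu_{\mu(X)}-\nu_{\lambda(X)}|<\beta$ holds, so \eqref{Guan} makes every $F^{ii}$ and $F^\tau$ comparable to $\sum F^{jj}$, turning $-NF^{ii}(\nabla_id)^2$ into a uniformly coercive term in every direction.

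\textbf{Step 1 (computing $\mathcal{L}\varPsi$).} The easy pieces give $\mathcal{L}\rho^2\leq C\sum F^{ii}$ (using $\rho_t\equiv0$) and, from the concavity inequality $\mathcal{L}(\underline u-u)\geq0$ in \eqref{dj-5},
\[
\mathcal{L}v \leq -\mathcal{L}(\underline u-u) - NF^{ii}(\nabla_id)^2 + C(a+Nd)\sum F^{ii}.
\]
For the tangential term I apply $\mathcal{L}(W^2)=2W\mathcal{L}W+2F^{ii}(\nabla_iW)^2$ to $W=\nabla_\gamma(u-\varphi)$. Differentiating \eqref{eqn} once in $e_\gamma$ and commuting covariant derivatives via \eqref{hess-A70} yields $|\mathcal{L}(\nabla_\gamma u)|\leq C(1+\sum F^{ii})$, the $F^\tau\nabla_\gamma u_t$ contributions cancelling; adding the trivially bounded $\mathcal{L}(\nabla_\gamma\varphi)$ gives $|\mathcal{L}W|\leq C(1+\sum F^{ii}+F^\tau)$. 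Combined with the observation that $\sum_{\gamma<n}(\nabla_{i\gamma}u)^2\geq\tfrac12\widehat{\lambda}_i^2-C$ for $i<n$ in any frame diagonalizing $\{U_{ij}\}$, this produces
\[
-A_3\,\mathcal{L}\Bigl(\sum_{\gamma<n}|\nabla_\gamma(u-\varphi)|^2\Bigr) \leq -A_3\sum_{i<n}F^{ii}\widehat{\lambda}_i^2 + CA_3\bigl(1+\sum F^{ii}+F^\tau\bigr).
\]
Invoking Lemma~\ref{lem2} with $r=n$ and $\sigma=A_3/K$ then converts $-A_3\sum_{i<n}F^{ii}\widehat{\lambda}_i^2$ into $-K\sum f_i|\widehat{\lambda}_i|$ up to a $(CK^2/A_3+CK)(1+\sum F^{ii}+F^\tau)$ remainder; requiring $A_3\geq K^2$ keeps that coefficient bounded.

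\textbf{Step 2 (case analysis, and main obstacle).} After summing, the surviving positives take the form $C_\star(1+\sum F^{ii}+F^\tau)$ with $C_\star$ polynomial in the earlier constants, against the negatives $-K\sum f_i|\widehat{\lambda}_i|$, $-A_1\mathcal{L}(\underline u-u)$, and $-A_1NF^{ii}(\nabla_id)^2$. In case~(i), \eqref{gj} gives $-A_1\mathcal{L}(\underline u-u)\leq-A_1\varepsilon(1+\sum F^{ii}+F^\tau)$, and choosing $A_1$ large (after $A_3$) absorbs $C_\star$ and certifies \eqref{sb-L}. In case~(ii), \eqref{Guan} with $|\nabla d|=1$ in $M_T^\delta$ yields $F^{ii}(\nabla_id)^2\geq c_\beta\sum F^{ii}\geq c_\beta'(\sum F^{ii}+F^\tau)$, so $-A_1NF^{ii}(\nabla_id)^2\leq-A_1Nc_\beta'(\sum F^{ii}+F^\tau)$, and taking $N$ large after $A_1$ and then $\delta$ small (so $A_1N\delta$ stays bounded) absorbs $C_\star$. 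The main obstacle is recovering the stand-alone constant $-K$ on the right of \eqref{sb-L} in case~(ii), since $\sum F^{ii}+F^\tau$ is not a priori bounded below by a constant. I would handle this exactly as in the derivation of \eqref{dj-9}: combining concavity with the subsolution bound $f(\mu)\geq\psi$ and the near-diagonal structure forced by \eqref{Guan} yields, on the case~(ii) set, a uniform lower bound $\sum F^{ii}+F^\tau\geq c_0>0$, whereupon the coercive $-(\sum F^{ii}+F^\tau)$ term also dominates $-K$, completing the proof.
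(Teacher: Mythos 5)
Your overall architecture is the paper's: the dichotomy $|\nu_\mu-\nu_\lambda|\geq\beta$ versus $|\nu_\mu-\nu_\lambda|<\beta$, the use of \eqref{gj} in the first case and of \eqref{Guan} together with the $-NF^{ij}\nabla_id\nabla_jd$ part of $v$ in the second, and Lemma \ref{lem2} to absorb $\sum f_i|\widehat{\lambda}_i|$. But there is a genuine gap exactly at the point you single out as the main obstacle: producing the stand-alone constant $-K$ in \eqref{sb-L} in case (ii). You claim that the mechanism behind \eqref{dj-9} yields a uniform lower bound $\sum F^{ii}+F^\tau\geq c_0>0$ on the case-(ii) set. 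It does not: \eqref{dj-9} and its analogue \eqref{gs13} only give $\sum F^{ii}+F^\tau\geq b_0/|\widehat{\lambda}|$, which degenerates as $|\widehat{\lambda}|\to\infty$, and no uniform positive lower bound on $\sum f_i+F^\tau$ follows from \eqref{f1}--\eqref{f5} alone. The paper closes this by splitting case (a) further: if $|\widehat{\lambda}|\geq R$ with $R$ large, \eqref{gs13} combined with \eqref{eq3-8} gives $\sum_{i\neq r}f_i\widehat{\lambda}_i^2\geq c\,b_0|\widehat{\lambda}|\geq c\,b_0R$ (see \eqref{bs11}--\eqref{bs12}), so the retained quadratic good term itself supplies both the $-CA_3$ and the $-K$; if $|\widehat{\lambda}|\leq R$, then $\lambda$ lies in a compact subset of $\Gamma$ by \eqref{f5}, the operator is uniformly parabolic ($c_1I\leq\{F^{ij}\}\leq C_1$, $c_1\leq F^\tau\leq C_1$), and \eqref{bs13} is immediate. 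Without this (or an equivalent) further case split, your argument does not produce the $-K$.

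Three smaller points. (1) Your inequality ``$\sum_{\gamma<n}(\nabla_{i\gamma}u)^2\geq\tfrac12\widehat{\lambda}_i^2-C$ for $i<n$ in any frame diagonalizing $\{U_{ij}\}$'' is not meaningful as stated: the range $\gamma<n$ refers to the boundary-adapted frame with $e_n$ normal to $\partial M$, which in general does not diagonalize $U$. The correct statement is \eqref{eq3-7}, $\sum_{l<n}F^{ij}U_{il}U_{jl}\geq\tfrac12\sum_{i\neq r}f_i\widehat{\lambda}_i^2$ for \emph{some} index $r$ determined by the linear algebra (Proposition 2.19 of \cite{G}); you cannot prescribe $r=n$, though Lemma \ref{lem2} applies to whichever $r$ arises. (2) Your bound $|\mathcal{L}(\nabla_\gamma u)|\leq C(1+\sum F^{ii})$ omits the term $C\sum f_i|\widehat{\lambda}_i|$ present in \eqref{bs1}, which comes from the Christoffel contributions $F^{ij}\Gamma^l_{i\gamma}\nabla_{jl}u$ on a manifold; this is precisely one of the terms the barrier must dominate, so it cannot be dropped, although your subsequent use of Lemma \ref{lem2} would still absorb it. (3) Your ordering of constants is circular: you fix $A_1$ before $A_2$, yet $A_1$ must absorb the positive contribution $CA_2\sum f_i$ from $\mathcal{L}(A_2\rho^2)$; the paper's choice $A_1\gg A_2\gg A_3\gg1$, with $a,\delta,N$ fixed beforehand, avoids this.
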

\begin{proof}
For any fixed $(x, t) \in M^{\delta}_T$, we may assume that $U_{ij}$ and $F^{ij}$ are both
diagonal at $(x, t)$.
Firstly, we have (see \cite{G} for details),
\begin{equation}
\label{bs1}
\begin{aligned}
\mathcal{L} (\nabla_k (u - \varphi))  \leq \,&
   C \Big(1 + \sum f_i |\widehat{\lambda}_i| + \sum f_i + F^\tau\Big),
 \;\; \forall \; 1 \leq k \leq n.
\end{aligned}
\end{equation}
Therefore,
\begin{equation}
\label{bs2}
\begin{aligned}
 \sum_{l < n} \mathcal{L}  (|\nabla_l (u - \varphi)|^2)
 \geq \, \sum_{l < n} F^{ij} U_{i l} U_{j l}
         - C \Big(1 + \sum f_i |\widehat{\lambda}_i| + \sum f_i + F^\tau\Big).
\end{aligned}
\end{equation}
Using the same proof of Proposition 2.19 in \cite{G}, we can show
\begin{equation}
\label{eq3-7}
\sum_{l < n} F^{ij} U_{i l} U_{j l}
     \geq \frac{1}{2} \sum_{i \neq r} f_i \widehat{\lambda}_i^2,
\end{equation}
for some index $r$. Write $\mu = \mu (x, t)$ and $\lambda = \lambda (x, t)$ and
note that $\mu = (\widehat{\mu}, - \ul u_t)$ and $\lambda = (\widehat{\lambda}, - u_t)$, where
$\widehat{\mu} = \lambda (\ul U)$.

We shall consider two cases as before: $\mathbf{(a)}$ $|\nu_{\mu} - \nu_{\lambda}| < \beta$ and
$\mathbf{(b)}$ $|\nu_{\mu} - \nu_{\lambda}| \geq \beta$.

Case {\bf (a)}.  By \eqref{Guan}, we have
\begin{equation}
\label{bs3}
 f_i \geq \frac{\beta}{\sqrt{n + 1}} \Big(\sum f_k + F^\tau\Big),
    \;\; \forall \, 1 \leq i \leq n.
\end{equation}

Now we make a little modification of the proof of Lemma 3.1 in \cite{G2}
to show the following inequality
\begin{equation}
\label{eq3-8}
\sum_{i \neq r} f_i \widehat{\lambda}_i^2 %\sum_{l < n} F^{ij} U_{i l} U_{j l}
     \geq c_0 \sum f_i \widehat{\lambda}_i^2 - C_0 \Big(\sum f_i + F^\tau\Big)
\end{equation}
for some $c_0, C_0 > 0$. If $\widehat{\lambda}_r < 0$, we have
\begin{equation}
\label{bs4}
\widehat{\lambda}_r^2 \leq n \sum_{i \neq r} \widehat{\lambda}_i^2 + C,
\end{equation}
where $C$ depends on the bound of $u_t$ since
\[
\sum \widehat{\lambda}_i - u_t > 0.
\]
Therefore, by \eqref{bs3} and \eqref{bs4}, we have
\begin{equation}
\label{bs5}
f_r \widehat{\lambda}_r^2 \leq n f_r \sum_{i \neq r} \widehat{\lambda}_i^2
  + C f_r \leq \frac{n \sqrt{n + 1}}{\beta} \sum_{i \neq r} f_i \widehat{\lambda}_i^2 + C \sum f_i
\end{equation}
and \eqref{eq3-8} holds.

Now suppose $\widehat{\lambda}_r \geq 0$. By the concavity of $f$,
\begin{equation}
\label{bs6}
f_r \widehat{\lambda}_r \leq f_r \widehat{\mu}_r - F^\tau (\ul u_t - u_t)
   + \sum_{i \neq r} f_i (\widehat{\mu}_i - \widehat{\lambda}_i).
\end{equation}
Thus, by \eqref{bs3} and Schwarz inequality, we have
\begin{equation}
\label{bs7}
\begin{aligned}
& \frac{\beta f_r \widehat{\lambda}^2_r}{\sqrt{n + 1}} \Big(\sum f_i + F^\tau\Big)\\
   \leq \,& f^2_r \widehat{\lambda}^2_r
     \leq C \Big(f^2_r \widehat{\mu}^2_r + \sum_{k \neq r} f_k \sum_{i \neq r}
        f_i (\widehat{\mu}_i^2 + \widehat{\lambda}_i^2) + (F^\tau)^2\Big)\\
          \leq \,& C \Big(\sum f_i + F^\tau\Big) \Big\{\Big(\sum f_i + F^\tau\Big)
            + \sum_{i \neq r} f_i \widehat{\lambda}_i^2\Big\},
\end{aligned}
\end{equation}
where $C$ may depend on the bound of $|u_t|$. It follows that
\begin{equation}
\label{bs8}
f_r \widehat{\lambda}^2_r \leq C \sum_{i \neq r} f_i \widehat{\lambda}_i^2
   + C \Big(\sum f_i + F^\tau\Big)
\end{equation}
and \eqref{eq3-8} holds.

We first suppose $|\lambda| \geq R$ for $R$ sufficiently large.
By \eqref{gs13}, we see
\begin{equation}
\label{bs11}
\sum f_i \widehat{\lambda}_i^2 \geq b_0 |\widehat{\lambda}|
\end{equation}
when $R$ is sufficiently large.
Since $|\nabla d| \equiv 1$, when $a$ and $\delta$ are sufficiently
small, by \eqref{bs3}, we have,
\begin{equation}
\label{bs15}
\begin{aligned}
\mathcal{L} v \leq \,& \Big(\mathcal{L} (u - \ul u) + C_0 (a + N d) \sum f_i
  - N F^{ij} \nabla_i d \nabla_j d\Big)\\
     \leq \,& - \frac{\beta N}{2\sqrt{n + 1}} \Big(\sum f_k + F^\tau\Big).
\end{aligned}
\end{equation}
Note that for any $\sigma > 0$,
\begin{equation}
\label{bs16}
\sum f_i |\widehat{\lambda}_i| \leq \sigma \sum f_i \widehat{\lambda}_i^2 + \frac{1}{\sigma} \sum f_i.
\end{equation}
Therefore, it follows from \eqref{eq3-8}, \eqref{bs15} and \eqref{bs16} that
for any $\sigma > 0$,
\begin{equation}
\label{bs10}
\begin{aligned}
\mathcal{L} \varPsi \leq \,&- \frac{\beta A_1 N}{2\sqrt{n + 1}} \Big(\sum f_k + F^\tau\Big) + C A_2 \sum f_i \\
& - \frac{A_3}{2} \sum_{i \neq r} f_i \widehat{\lambda}_i^2
    + C A_3 \Big(1 + \sum f_i |\widehat{\lambda}_i| + \sum f_i + F^\tau\Big)\\
\leq \,& - \frac{\beta A_1 N}{2\sqrt{n + 1}} \Big(\sum f_k + F^\tau\Big)
     - \frac{A_3 c_0}{2} \sum f_i \widehat{\lambda}_i^2 + C A_2 \sum f_i\\
& + C A_3 \Big(1 + \sum f_i |\widehat{\lambda}_i| + \sum f_i + F^\tau\Big)\\
\leq \,& - \frac{\beta A_1 N}{2\sqrt{n + 1}} \Big(\sum f_k + F^\tau\Big)
    + \Big(A_3 \sigma - \frac{A_3 c_0}{2}\Big) \sum f_i \widehat{\lambda}_i^2 \\
& + C \Big(A_2 + \frac{A_3}{\sigma}\Big) \sum f_i + C A_3 \Big(1 + F^\tau\Big).
\end{aligned}
\end{equation}
Let $\sigma = c_0/4$, we find
\begin{equation}
\label{bs12}
\begin{aligned}
\mathcal{L} \varPsi
\leq \,& - \frac{\beta A_1 N}{2\sqrt{n + 1}} \Big(\sum f_k + F^\tau\Big)
    - \frac{A_3 c_0}{4} \sum f_i \widehat{\lambda}_i^2 \\
& + C (A_2 + A_3)\Big(\sum f_i + F^\tau \Big) + C A_3\\
\leq \,& - \frac{\beta A_1 N}{2\sqrt{n + 1}} \Big(\sum f_k + F^\tau\Big)
    - \frac{A_3 c_0 b_0}{8} |\widehat{\lambda}| -  A_3 \sum f_i |\widehat{\lambda}_i| \\
& + C (A_2 + A_3)\Big(\sum f_i + F^\tau \Big) + C A_3\\
\leq \,& - \frac{\beta A_1 N}{2\sqrt{n + 1}} \Big(\sum f_k + F^\tau\Big)
    -  A_3 \Big(1 +\sum f_i |\widehat{\lambda}_i|\Big)\\
& + C (A_2 + A_3)\Big(\sum f_i + F^\tau \Big)
\end{aligned}
\end{equation}
by choosing $R \geq 8C/c_0 b_0 + 1$.

If $|\widehat{\lambda}| \leq R$, by \eqref{f1} and \eqref{f5}, we have
\[
c_1 I \leq \{F^{ij}\} \leq C_1, \ \ c_1 \leq F^\tau \leq C_1
\]
for some uniform positive constants $c_1$, $C_1$ which may depend
on $R$. Therefore, we have
\begin{equation}
 \label{bs13}
 \mathcal{L} \varPsi \leq C (- A_1 +  A_2 +  A_3) \Big(1 + \sum f_i
     + \sum f_i |\widehat{\lambda_i}| + F^\tau\Big)
\end{equation}
where $C$ depends on $c_1$ and $C_1$.

Case ({\bf b}). By Lemma \ref{G-Lemma}, we may fix $a$ and $\delta$ sufficiently
small such that $v \geq 0$ in $M^\delta_T$ and
\begin{equation}
\label{v2}
\mathcal{L} v \leq - \frac{\varepsilon}{2} \Big(1 + \sum f_i + F^\tau \Big) \;\; \mbox{in} \;\; M_T^\delta.
\end{equation}
Thus, by Lemma \ref{lem2}, we have
\begin{equation}
\label{bs14}
\begin{aligned}
\mathcal{L} \varPsi
 \leq \,& - \frac{\varepsilon A_1}{2} \Big(1 + \sum f_i + F^\tau \Big) + C A_2 \sum f_i
    - \frac{A_3}{2} \sum_{i \neq r} f_i \widehat{\lambda}_i^2 \\
      & + C A_3 \Big(1 + \sum f_i + \sum f_i |\widehat{\lambda}_i|\Big)\\
 \leq \,& \Big(- \frac{\varepsilon A_1}{2} + C A_2 + C A_3\Big) \Big(1 + \sum f_i + F^\tau \Big)
      - \frac{A_3}{4} \sum_{i \neq r} f_i \widehat{\lambda}_i^2\\
 \leq \,& \Big(- \frac{\varepsilon A_1}{2} + C A_2 + C A_3\Big) \Big(1 + \sum f_i + F^\tau \Big)
      - A_3 \sum f_i |\widehat{\lambda}_i|.
\end{aligned}
\end{equation}
Checking \eqref{bs12}, \eqref{bs13} and \eqref{bs14}, we can choose $A_1 \gg A_2 \gg A_3 \gg 1$
such that \eqref{sb-L} holds and  $\varPsi \geq K (d + \rho^2)$ in $\overline{M_T^\delta}$.
Therefore, Lemma \ref{lem1} is proved.
\end{proof}
%\begin{remark}
%\label{rmk-1}
%By Remark \ref{rmk-0}, similar to Lemma \ref{lem1}, we can easily prove that $v$
%satisfies \eqref{v} by choosing suitable $\delta$, $a$, $N$ and $\varepsilon$
%if $\ul u$ satisfies \eqref{ssub}.
%\end{remark}

The estimates for mixed tangential-normal second derivatives
can be established immediately using $\varPsi$ as a barrier
function by \eqref{bs1} and the maximum principle (see \cite{BD}).

The pure normal second derivatives can be derived as \cite{G}
using an idea of Trudinger \cite{T}. The reader is referred
to \cite{BD} for details.

\end{document}